\numberwithin{equation}{section}
\newtheorem{theorem}{Theorem}[section]
\newtheorem{lemma}[theorem]{Lemma}
\newtheorem{proposition}[theorem]{Proposition}
\newtheorem{corollary}[theorem]{Corollary}
\theoremstyle{definition}
\newtheorem{remark}[theorem]{Remark}
\DeclareMathOperator{\re}{Re}
\DeclareMathOperator{\im}{Im}
\newcommand{\norm}[1]{ \left\|  #1 \right\| }
\def\al{\alpha}
\def\lam{\lambda}
\def\vphi{\varphi}
\def\De{\Delta}
\def\iy{\infty}
\def\cB{\mathcal{B}}
\def\cE{\mathcal{E}}
\def\cH{\mathcal{H}}
\newcommand{\cal}{\mathcal}
\newcommand{\R}{\mathbb{R}}
\def\sideremark#1{\ifvmode\leavevmode\fi\vadjust{\vbox to0pt{\vss
 \hbox to 0pt{\hskip\hsize\hskip1em
\vbox{\hsize2cm\tiny\raggedright\pretolerance10000
 \noindent #1\hfill}\hss}\vbox to8pt{\vfil}\vss}}}%
\begin{document}

\title[Damped NLS with Stark effect]{Damped nonlinear Schr\"odinger equation with Stark effect}

\author{Yi Hu} 
\address{Department of Mathematical Sciences,
Georgia Southern University,
Statesboro, GA 30458}
\email{yihu@georgiasouthern.edu}

\author{Yongki Lee}
\email{yongkilee@georgiasouthern.edu}

\author{Shijun Zheng}
\email{szheng@georgiasouthern.edu}

\begin{abstract} We study the $L^2$-critical damped NLS with a Stark potential.  
We prove that the threshold for global existence and finite time blowup of this equation is given by $\norm{Q}_2$,
where $Q$ is the unique positive radial  solution of $\De Q+ |Q|^{4/d}Q=Q$ in $H^1(\R^d)$. 
Moreover,  
in any small neighborhood of $Q$, 
there exists an initial data $u_0$ above the ground state such that the solution flow  admits the log-log blowup speed. 
This verifies the structural stability for the ``$\log$-$\log$ law'' associated to the NLS mechanism under the perturbation by a damping term and a Stark potential. 
The proof of our  main theorem is based on the Avron-Herbst formula and the
analogous result for the unperturbed damped NLS.  
\end{abstract}

\subjclass[2020]{35B40,
35Q55}  

\maketitle

\section{Introduction}

Consider the damped nonlinear Schr\"odinger equation (dNLS) with Stark effect:
	\begin{align}\label{eq:dnls_stark}
	i u_t = - \Delta u + (E \cdot x) u - |u|^{p - 1} u - i a u, \qquad
	u(0) = u_0\in \cH^1. 
	\end{align}
Here $p\in (1,1+\frac{4}{n-2})$, $n\ge 1$, $u = u(t, x): \mathbb{R}^{1 + n} \rightarrow \mathbb{C}$ is the wave function,
$V(x)=V_E(x):=E \cdot x$ is a Stark potential with $E \in \mathbb{R}^n \setminus \{ 0 \}$,
and $i a u$ is a linear damping to the system with $a > 0$ being the coefficient of friction. 
The initial data $u_0$ is in the energy space 
$\mathcal{H}^1 := \left\{ \phi \in H^1: x \phi \in L^2 \right\}$,
whose   norm is given by
$\| \phi \|_{\mathcal{H}^1} := \left(\| \phi \|_{H^1}^2 + \| x \phi \|^2_{L^2}\right)^{1/2}$,  
where $H^1$  denotes the usual Sobolev space.  The associated energy of the dNLS is given by
\begin{align}
	\mathcal{E}_V(u)
	:= \int_{\mathbb{R}^n} \left(|\nabla u|^2 
	+  V_E |u|^2 
	- \frac{2}{p + 1}  |u|^{p + 1} \right) dx. \label{eq:stark_energy}
	\end{align}
In physics,
$E$ may represent an electric field or quantum gravity \cite{CyFroeKirSi, TsuWa},
and damped NLS appear in  nonlinear optics,
plasma physics,
and fluid mechanics \cite{
Fib01damped, FKKV19, KhKMTh10,PePoVaz95}. 

The hamiltonian $H = H_E: = -\Delta + E \cdot x$ arises in the study of hydrogen model
in connection with the resonance phenomenon associated with quasi-stationary states \cite{GraGre78, Ha07}.  
The operator $H_E$ is essentially selfadjoint in $C_0^\infty(\mathbb{R}^3)$,
which has absolutely continuous spectrum $\mathbb{R} = (-\infty, \infty)$.
The presence of a Stark potential has an resonance effect such that it shifts the discrete energies of the hydrogen atom into resonances
(pseudo-eigenvalues)
via the hamiltonian $H_{Z, E} = - \De - \frac{Z}{|x|} + E \cdot x$,
where $Z \ge 0$ is the atomic number.
This distinguishes from the harmonic oscillator  $- \De + |x|^2$ 
in that $H_{Z,E}$  has weaker decaying ``bound state'',
and so $V_E= E \cdot x$  is also called the spatially damped oscillator.

On the other hand, when $a > 0$, the linear term $-ia u$ is present as a temporal damping effect for the NLS equation.
So it might be of analytical  interest to study the Schr\"odinger type system as  (\ref{eq:dnls_stark}). 
There have been a large literature in the field of wave dispersion-dissipation in physics and numerics 
\cite{AkDouKaMcKi,BeDiaz23, FKKV19,Lau95,Malomed22mu}. 

If $E = 0$ and $a = 0$,
then equation \eqref{eq:dnls_stark} becomes the classic NLS
	\begin{align}\label{eq:nls}
	i u_t = - \Delta u - |u|^{p - 1} u\,, \qquad
	u(0) = u_0\in H^1
	\end{align} 
and its well-posedness and blowup have been studied extensively in the $H^1$-subcritical and $H^1$-critical cases $p \leq 1 + \frac{4}{n - 2}$, see 
\cite{Ca,Tao}.
 The following quantities are conserved in time for (\ref{eq:nls}):
	\begin{alignat}{9}
	&\text{(Mass)} & \mathcal{M}(u) &:= \| u(t) \|^2_{L^2} \label{eq:mass} \\
	&\text{(Energy)}	& \mathcal{E}_0(u) &:= \| \nabla u(t) \|_{L^2}^2 - \frac{2}{p + 1} \| u(t) \|_{L^{p + 1}}^{p + 1} \label{eq:energy} \\
	&\text{(Momentum)} & \qquad \mathcal{P}(u) &:= \im \left( \int_{\mathbb{R}^n} \overline{u} \nabla u dx \right)\,. \label{eq:momentum}
	\end{alignat}
	
In the $L^2$-critical regime $p = 1 + \frac{4}{n}$,
if we let $Q$ be the unique positive and radial solution in $H^1$ to the elliptic equation 
	\begin{align}\label{eQ0}
	 \Delta Q + |Q|^\frac{4}{n} Q=Q\,,
	\end{align}
then Weinstein \cite{Wein} showed that $\| Q \|_{L^2}$ is the threshold 
for global existence and finite time blowup  of the Cauchy problem \eqref{eq:nls} in $H^1$.
  Namely, if
 $\| u_0 \|_{L^2} < \| Q \|_{L^2}$,
then the solution $u(t)$ of (\ref{eq:nls}) is global in $H^1$,
while for any $c \geq \| Q \|_{L^2}$,
there exists  $u_0 \in H^1$ with $\| u_0 \|_{L^2} = c$ such that the solution $u(t)$ blows up in finite time $T^*>0$.  
Further, Merle \cite{Mer93} showed that the set of all minimal mass blowup solutions at the ground state level $\norm{Q}_2$ consists of 
pseudo-conformal transforms of the solitary wave $e^{it}Q(x)$: 
\begin{align} 
& S(t,x)= \frac{e^{i\theta}}{|T-t|^{d/2}} Q(\frac{x-x_0}{T-t}) 
e^{-i\frac{|x-x_0|^2}{4 (T-t )}} e^{\frac{i}{T-t}}\,        \label{pSeudo}
\end{align} 
where $(\theta, T, x_0 )\in \R\times \R\times \R^n$ are parameters.  

When the initial data is above the ground state level,  
under the assumption of certain spectral property (see Remark \ref{re:spectral}),  
Merle and Rapha\"el
(\cite{MerRa0, MerRa1})
proved the sharp blowup speed for the solutions,
i.e.,
there exists a small universal constant $\alpha^* > 0$ such that for all $u_0 \in \cB_{\alpha^*}$ with negative energy $\mathcal{E}_0(u_0)<0$, 
the solution $u(t)$ blows up as $t \rightarrow T^*$ with the speed
(called ``log-log law'')
	\begin{align}\label{eq:loglog_speed}
	\| \nabla u(t) \|_{L^2}
	\approx \left( \frac{\log \left| \log \left( T^* - t \right) \right|}{T^* - t} \right)^\frac{1}{2},
	\end{align}
where
	\begin{align*}
	\cB_{\alpha}
	:= \left\{ \phi \in H^1:
	\| Q \|_{L^2}
	< \| \phi \|_{L^2}
	< \| Q \|_{L^2} + \alpha \right\}.
	\end{align*}
This log-log regime is also known to be stable in $H^1$  \cite{Ra05stabi}.

If $E = 0$ in equation \eqref{eq:dnls_stark}, then one has the damped NLS
	\begin{align}\label{eq:dnls}
	i \varphi_t = - \Delta \varphi - |\varphi|^{p - 1} \varphi - i a \varphi, \qquad
	\varphi(0) = \vphi_0=u_0\in H^1\,.
	\end{align}
The local well-posedness  in $H^1$ for the dNLS is well-known (see e.g. \cite{Ca,Tsu}).
Precisely speaking,
for every $u_0 \in H^1$, 
there exists $T>0$ and a unique solution $\varphi \in C \left( [0, T), H^1 \right)$ 
of the Cauchy problem \eqref{eq:dnls},
where $[0,T)=[0,T_{max})$ is the maximal time interval of existence. 
Moreover,  if $T_{max} $ is finite,   
then $\| \nabla \varphi(t) \|_{2} \rightarrow \infty$ as $t \rightarrow T_{max}$.
Because of the damping term $-i a \varphi$,
the dynamics of the solution may behave differently from that of the classic NLS.
For instance,
the mass,
energy,
and momentum \eqref{eq:mass}--\eqref{eq:momentum} are not conserved.
Darwich \cite{Dar} studied the Cauchy problem \eqref{eq:dnls}
in the $L^2$-critical regime $p = 1 + \frac{4}{n}$ when $n = 1, 2, 3, 4$,
and he proved that $\| Q \|_{2}$ is the sharp threshold for the blowup phenomenon in $H^1$.
Furthermore,  there exists $\alpha_0>0$ such that for all $a>0$ and arbitrary $\al\in (0,\al_0)$,    
 there exists a blow-up solution in the log-log regime corresponding to some $u_0 \in \cB_{\alpha}$,     
see \mbox{Theorem \ref{thm:dnls_threshold}.} 

Motivated by the abovementioned work, 
in this paper we concentrate on the $L^2$-critical case $p=1+4/n$ for the focusing dNLS (\ref{eq:dnls_stark}) with a Stark potential. 
We shall  establish the sharp threshold for global existence and the log-log law for 
for the Cauchy problem \eqref{eq:dnls_stark} in the weighted Sobolev space 
$\mathcal{H}^1$.  
Our  main result is stated as follows. 

\begin{theorem}\label{thm:dnls_stark_threshold} Let $p = 1 + \frac{4}{n}$ and $a>0$. Suppose
$u_0 \in \mathcal{H}^1(\mathbb{R}^n)$ for $1 \leq n \leq 4$. 
	\begin{enumerate}
	\item If $\| u_0 \|_{L^2} \leq \| Q \|_{L^2}$,
	then the solution of \eqref{eq:dnls_stark} is global in time such that $u(t)\in C([0,\iy),\cH^1)$.
	
	\item There exists a small $\alpha_0 > 0$ such that for arbitrary $\al \in (0, \alpha_0)$,
	there exists  $u_0 \in \cB_\al\cap \mathcal{H}^1$ 
	such that the corresponding solution $u(t)$ of (\ref{eq:dnls_stark}) blows up at $T^* < \infty$ with the log-log speed \eqref{eq:loglog_speed}.
	\end{enumerate}
\end{theorem}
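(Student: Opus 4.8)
The plan is to deduce both parts of Theorem~\ref{thm:dnls_stark_threshold} from the corresponding facts about the unperturbed damped NLS \eqref{eq:dnls} (Theorem~\ref{thm:dnls_threshold}) via an Avron-Herbst type change of unknown that removes the linear Stark potential while leaving the damping term and the gauge-covariant nonlinearity untouched. Concretely, I would set $\Phi(t,x):=-\tfrac{t^3}{3}|E|^2-t\,E\cdot x$ and
\begin{equation}
u(t,x)=e^{i\Phi(t,x)}\,\vphi\!\left(t,\,x+t^2E\right).
\end{equation}
A direct computation -- the nonlinear analogue of the Avron-Herbst formula, using that $i\partial_t-(-\De+E\cdot x)$ intertwines with $i\partial_t+\De$ under this substitution and that $|e^{i\Phi}\vphi|^{p-1}(e^{i\Phi}\vphi)=e^{i\Phi}|\vphi|^{p-1}\vphi$ -- shows that $u$ solves \eqref{eq:dnls_stark} on $[0,T)$ if and only if $\vphi$ solves \eqref{eq:dnls} on $[0,T)$, with the \emph{same} damping coefficient $a$ and the \emph{same} power $p$, and with $u(0)=\vphi(0)$ since $\Phi(0,\cdot)\equiv0$.

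Then I would record the mapping properties of this conjugation: for each fixed $t$ the map $\vphi\mapsto u$ is an $L^2$ isometry and a bounded isomorphism of $\cH^1$ onto itself whose norm is controlled on every compact time interval (the factor $e^{-itE\cdot x}$ shifts $\nabla$ by $tE$, the translation shifts $x$ by $t^2E$) -- this is exactly why the theorem is posed in the weighted space $\cH^1$ and not in $H^1$. Consequently the $\cH^1$ local well-posedness and blow-up alternative for \eqref{eq:dnls_stark} are inherited from those of \eqref{eq:dnls}, data related by $u_0=\vphi_0$ yield the same maximal time $T_{max}$, and, since $\big|\,\norm{\nabla u(t)}_2-\norm{\nabla\vphi(t)}_2\,\big|\le t|E|\,\norm{\vphi(t)}_2\le t|E|\,\norm{\vphi_0}_2$ (the $L^2$ mass along \eqref{eq:dnls} being non-increasing), the gradient norms blow up simultaneously and at the same rate.

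Given this machinery, part (1) follows at once: for $u_0\in\cH^1$ with $\norm{u_0}_2\le\norm{Q}_2$, the datum $\vphi_0:=u_0$ satisfies $\norm{\vphi_0}_2\le\norm{Q}_2$, so by Theorem~\ref{thm:dnls_threshold} the solution $\vphi$ of \eqref{eq:dnls} is global; propagation of the weighted regularity is the standard virial estimate $\tfrac{d}{dt}\norm{x\vphi}_2\le 2\norm{\nabla\vphi}_2$ for \eqref{eq:dnls}, so $\vphi\in C([0,\iy),\cH^1)$ and, transferring back, $u\in C([0,\iy),\cH^1)$. For part (2) I would invoke Theorem~\ref{thm:dnls_threshold} to obtain $\al_0>0$ such that for each $\al\in(0,\al_0)$ there is $\vphi_0\in\cB_\al$ whose \eqref{eq:dnls}-evolution $\vphi$ blows up at some $T^*<\iy$ with the log-log speed; because $Q$ is exponentially localized and the log-log data are localized perturbations of $Q$ (equivalently, by the $H^1$-stability of the log-log regime and the density of $\cH^1$ in $H^1$), $\vphi_0$ can be taken in $\cH^1$. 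Setting $u_0:=\vphi_0\in\cB_\al\cap\cH^1$, the transferred solution $u$ of \eqref{eq:dnls_stark} has maximal time $T^*$ and $\norm{\nabla u(t)}_2=\norm{\nabla\vphi(t)}_2+O(1)$, hence blows up at $T^*$ with the log-log speed \eqref{eq:loglog_speed}.

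The substantive content is the reduction itself; the remaining work is bookkeeping. The step I expect to demand the most care is making the Avron-Herbst conjugation rigorous in $\cH^1$: verifying that $\cH^1$ (not merely $H^1$) is the right functional framework, that the maximal time and blow-up alternative genuinely transfer, and that the log-log initial data provided by Theorem~\ref{thm:dnls_threshold} can indeed be chosen inside $\cH^1$.
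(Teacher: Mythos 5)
Your proposal is correct and follows essentially the same route as the paper: the Avron--Herbst conjugation $u(t,x)=\varphi(t,x+t^2E)\,e^{-i(tE\cdot x+|E|^2t^3/3)}$ reducing \eqref{eq:dnls_stark} to \eqref{eq:dnls}, transfer of globality for part (a), and for part (b) the splitting $\nabla u=\mathrm{I}+\mathrm{II}$ with $\|\mathrm{II}\|_{L^2}=t e^{-at}|E|\,\|u_0\|_{L^2}$ bounded so the log-log rate is preserved. Your extra care about propagating the weighted $\cH^1$ regularity and about choosing the log-log datum inside $\cH^1$ addresses points the paper leaves implicit, but it is the same argument.
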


The proof of Theorem \ref{thm:dnls_stark_threshold}
(see Section \ref{sect:proof_main_theorem})
relies on the Avron-Herbst transform
(Proposition \ref{prop:ah}) and Theorem \ref{thm:dnls_threshold}.
An alternative, independent proof of part (a) in all dimensions is given as a corollary
(Corollary \ref{cor:proof_a}) 
of a limiting profile result in Theorem \ref{thm:limiting_profile}.

We wish to mention that for the  $L^2$-critical NLS \eqref{eq:nls},
there exist minimal mass blowup solutions (\ref{pSeudo}) with $\| u_0 \|_{2} = \| Q \|_{2}$ 
and   
  pseudo-conformal blow-up  speed \mbox{$(T^* - t)^{-1}$.} 
However, in  the presence of damping,
$\| u_0 \|_{2} = \| Q \|_{2}$ will lead to a global solution to the equation \eqref{eq:dnls_stark},
so  Theorem \ref{thm:dnls_stark_threshold} also proves the {\em none existence of minimal mass blow-up solutions} for (\ref{eq:dnls_stark}).

 {Theorem \ref{thm:dnls_stark_threshold} shows the structural stability for the NLS mechanism:  The log-log law continues to hold when the free NLS
  $i u_t=-\De u-|u|^{4/n} u$ is perturbed with a damping term and a linearly growth potential. }  
  Similar results  have been  obtained for other Schr\"odinger type equations rencently 
\cite{BaHuZh2019a,FanSuZh22stochastic,PlanRa07,SunZh20}. 
However, from the proofs in either \cite{Dar} or \cite{DarwMolin16},  it is not evident whether 
such $\log$-$\log$ regime is topologically stable for the dissipative NLS (\ref{eq:dnls_stark}) in an electric field.   

Our second main result is concerned with finding a sufficient condition 
 on singular (blow-up) solutions for (\ref{eq:dnls_stark}).  
By the $A$-$H$ transform in Proposition \ref{prop:ah},
 we are able to show that, given any $u_0$ with $\cE_0(u_0)<0$,  
there exists $a_*=a_*(u_0)>0$ such that  
the solution $u(t)$ blows up for all $0<a<a_0$   
 by an application of \cite[Theorem 1.2]{Dinh21b}, 
see Theorem \ref{t:dinh:E0<0}.

\begin{theorem}\label{a0:blup:stark}  Let $p=1+4/n$, $n\ge 1$.  
Suppose $u_0\in \cH^1$ and $\cE_V(u_0)<{\int E\cdot x |u_0|^2.}$   
 Then, there exists $a_*=a_*(\norm{u_0}_{H^1})>0$ such that  for all $a\in (0,a_*)$,     
the corresponding solution     
$u(t)$ of (\ref{eq:dnls_stark})  in $C( [0,T^*), \cH^1 )$     
 blows up finite time on $[0,T^*)$ in the sense that $\norm{\nabla u(t)}_2\to \iy$ as $t\to T^*$.       
\end{theorem}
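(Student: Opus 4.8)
\emph{Proposal.} The first thing to notice is that the hypothesis is just the negativity of the potential-free energy: substituting $V_E=E\cdot x$ and $p=1+4/n$ into \eqref{eq:stark_energy} gives
\[
\cE_V(u_0)-\int_{\R^n}E\cdot x\,|u_0|^2\,dx
=\int_{\R^n}|\nabla u_0|^2\,dx-\frac{2}{p+1}\int_{\R^n}|u_0|^{p+1}\,dx
=\cE_0(u_0),
\]
so what is being assumed is $\cE_0(u_0)<0$. Given this, my plan is to peel off the Stark potential with the Avron--Herbst transform, invoke the blowup statement for the potential-free damped NLS, and then transport the singularity back through the transform.

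First I would use Proposition \ref{prop:ah} to write $u\in C([0,T^*),\cH^1)$ as the Avron--Herbst conjugate of a function $\varphi$ solving the potential-free damped NLS \eqref{eq:dnls} with the \emph{same} friction coefficient $a$ and with $\varphi(0)=u_0$; concretely $u(t,x)=e^{i\Phi(t,x)}\varphi(t,x+t^2E)$ for a real phase $\Phi$ affine in $x$ with $\nabla_x\Phi(t,x)=-tE$ (see Proposition \ref{prop:ah} for the precise normalization). The conjugation is multiplication by the unimodular factor $e^{i\Phi}$ composed with a translation, hence an $L^2$-isometry that commutes with the gauge-covariant nonlinearity $|\cdot|^{p-1}(\cdot)$ and with the damping $-ia(\cdot)$, and it reduces to the identity at $t=0$. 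Consequently $\cE_0(\varphi(0))=\cE_0(u_0)<0$, $\norm{\varphi(0)}_{H^1}=\norm{u_0}_{H^1}$, and $u$ and $\varphi$ share the same maximal time of existence. Applying Theorem \ref{t:dinh:E0<0} (that is, \cite[Theorem~1.2]{Dinh21b}) to $\varphi$ then produces $a_*=a_*(\norm{u_0}_{H^1})>0$ such that for every $a\in(0,a_*)$ the solution $\varphi$ blows up at some $T^*<\infty$, with $\norm{\nabla\varphi(t)}_2\to\infty$ as $t\to T^*$.

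It then remains to move the gradient blowup from $\varphi$ to $u$. Differentiating $u(t,x)=e^{i\Phi(t,x)}\varphi(t,x+t^2E)$ and using that $\nabla_x\Phi(t,x)=-tE$ is constant in $x$ gives $\nabla u(t)=e^{i\Phi}\big(\nabla\varphi-itE\,\varphi\big)(t,\cdot+t^2E)$, hence, after the substitution $y=x+t^2E$,
\[
\norm{\nabla u(t)}_2=\norm{\nabla\varphi(t)-itE\,\varphi(t)}_2\ \ge\ \norm{\nabla\varphi(t)}_2-t\,|E|\,\norm{\varphi(t)}_2,\qquad 0\le t<T^*.
\]
Because the mass of a solution of \eqref{eq:dnls} dissipates, $\tfrac{d}{dt}\norm{\varphi(t)}_2^2=-2a\norm{\varphi(t)}_2^2$, one has $\norm{\varphi(t)}_2\le\norm{u_0}_2$, so the subtracted term is bounded by $T^*|E|\norm{u_0}_2<\infty$ on $[0,T^*)$; letting $t\to T^*$ then forces $\norm{\nabla u(t)}_2\to\infty$, which is the claimed blowup. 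The mirror-image bound $\norm{\nabla u(t)}_2\le\norm{\nabla\varphi(t)}_2+T^*|E|\norm{u_0}_2$ shows in addition that $u$ and $\varphi$ become singular at the same $T^*$, and that $u$ stays in $\cH^1$ on $[0,T^*)$ so long as $\varphi$ does.

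The reduction itself is routine once Proposition \ref{prop:ah} and Theorem \ref{t:dinh:E0<0} are in hand; the one place where I expect to have to be careful is this last transfer, since the Avron--Herbst gauge perturbs the gradient by the first-order term $itE\varphi$, so $\norm{\nabla u}_2\to\infty$ is not a formal consequence of $\norm{\nabla\varphi}_2\to\infty$. It comes out only because mass dissipation keeps $\norm{\varphi(t)}_2$ uniformly bounded and because Theorem \ref{t:dinh:E0<0} supplies a finite $T^*$, which together keep the perturbative term bounded up to the blowup time.
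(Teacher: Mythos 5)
Your proposal is correct and is exactly the argument the paper intends: the authors state that the proof is "a simple application of the Avron--Herbst formula \eqref{eq:ah_inv} and Theorem \ref{t:dinh:E0<0}" and leave the details as an exercise, and you supply precisely those details (the identification of the hypothesis with $\cE_0(u_0)<0$, the reduction to \eqref{a:dnls} via Proposition \ref{prop:ah}, and the transfer of the gradient blowup using $\norm{\nabla u(t)}_2\ge\norm{\nabla\varphi(t)}_2-t|E|\norm{\varphi(t)}_2$ together with mass dissipation and $T^*<\infty$). No gaps.
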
   

Note that this theorem, along with part (a) in Theorem \ref{thm:dnls_stark_threshold}, 
 implies the blow-up  for any initial data 
 in the open region $\{ u\in \cal{B}_{\al_0}\cap \cH^1:  \mathcal{E}_0(u)<0 \}$
 for any prescribed positive $\al_0$.

On the other hand, the numerical result in \cite{Fib01damped} and also \cite{CuiHS17,Dinh21b} 
suggest that increasing the value of $a>a_*$ can give rise to the effect that {\em damping arrests self-focusing}
so that the solution will survive over  infinity time. 
However,  such $a_*$ is dependent on the initial data. 
 Meanwhile, in the case $V_E=0$, a simple scaling argument 
informs that given any solution $u$ of $NLS_a$ in (\ref{a:dnls}), 
then  
  $u_\lam(t,x):=\frac1{\lam^{n/2}} u(\frac{t}{\lam^2},\frac{x}{\lam})$
solves $NLS_{a/\lam^2}$, cf. \cite[Remark 3.2]{Dar}.  
Thus, the problem of stability/instability of blow-up for the dNLS can be notably subtle and sensitive.

We would like to conclude that the results in this article inform that the Stark potential $V_E=E\cdot x$ 
does not seem to essentially change the blow-up dynamics by its interactions with the damping term and the $L^2$-critical nonlinearity. 
However, this potential may affect the scattering behavior owing to its interaction with a linear potential potential like $|x|^{-\gamma}$, 
as was observed in Ozawa's work and \cite{CarNa}. 
It would be of interest to further study the effect of $V_E$ on the long time asymptotic behavior for equation (\ref{eq:dnls_stark}) when $p<1+4/n$.

\section{Preliminaries}
For the dNLS (\ref{eq:dnls_stark}) with a Stark potential $V_E$, 
the local well-posedness  in $\cH^1$ holds. In fact, following a standard fixed point argument as in  \cite{Ca} or \cite{CarNa,Z2012a}, we
can easily show the following l.w.p. for (\ref{eq:dnls_stark}).
\begin{proposition}\label{p:dnls:VE} Let $p\in (1,1+4/(n-2))$. Suppose  $u_0 \in \cH^1$.  
Then there exists $T:=T^*\in (0,\iy]$ and a unique solution $u(t)$ in $C \left( [0, T), \cH^1 \right)$  
of the Cauchy problem \eqref{eq:dnls}, 
where $[0,T)$ is the lifespan for forward time. 
The blow-up alternative holds:   If $T $ is finite,   
then $\| \nabla u(t) \|_{2} \rightarrow \infty$ as $t \rightarrow T$.
\end{proposition}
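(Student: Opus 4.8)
The plan is a contraction-mapping argument on the Duhamel formulation
\[
u(t)=e^{-at}e^{-itH_E}u_0+i\int_0^t e^{-a(t-s)}e^{-i(t-s)H_E}\bigl(|u|^{p-1}u\bigr)(s)\,ds
\]
in a Strichartz space adapted to $\mathcal{H}^1$. First I would collect the linear input: $H_E=-\Delta+E\cdot x$ is essentially self-adjoint on $C_0^\infty$, so $e^{-itH_E}$ is a unitary group on $L^2$, and the damped linear flow $e^{-at}e^{-itH_E}$ differs from it only by the harmless factor $e^{-at}\le 1$. By the Avron--Herbst identity (Proposition \ref{prop:ah}), $e^{-itH_E}$ is conjugate to the free group $e^{it\Delta}$ by a gauge factor of modulus one together with a spatial translation of size $\sim t^2 E$; hence on every compact time interval it obeys the same dispersive and Strichartz estimates as $e^{it\Delta}$.

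Next, fix an $L^2$-admissible pair $(q,r)$ with $r=p+1$ (an $H^1$-subcritical choice since $p<1+\tfrac{4}{n-2}$) and work on a small ball of
\[
X_T:=\bigl\{u\in C([0,T];\mathcal{H}^1):\ \nabla u,\ \mathcal{J}_E(t)u\in L^q([0,T];L^r)\bigr\},
\]
where $\mathcal{J}_E(t)$ is the Stark analogue of the pseudo-conformal vector field $x+2it\nabla$, obtained by conjugating multiplication by $x$ through the Avron--Herbst transform. The facts I need about $\mathcal{J}_E(t)$ are: it commutes with the linear Stark Schr\"odinger operator (so it passes through the propagator in Duhamel); together with $\nabla$ and the $L^2$-mass it controls $\|xu(t)\|_2$ with constants depending only on $|E|$ and $t$; and, because $|u|^{p-1}u$ is gauge-covariant, it satisfies the Leibniz-type bound $|\mathcal{J}_E(t)(|u|^{p-1}u)|\lesssim|u|^{p-1}\,|\mathcal{J}_E(t)u|$, just as $\nabla$ does. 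Given these, the nonlinear terms are estimated by the standard H\"older--Sobolev inequalities of \cite{Ca,CarNa,Z2012a}, and for $T=T(\|u_0\|_{\mathcal{H}^1})$ small enough the map is a contraction on the ball; its fixed point is the unique solution in $C([0,T];\mathcal{H}^1)$.

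For the blow-up alternative, the local existence time depends only on $\|u_0\|_{\mathcal{H}^1}$, so it suffices to bound $\|u(t)\|_{\mathcal{H}^1}$ by $\|\nabla u(t)\|_2$ on $[0,T)$. From the equation one gets $\frac{d}{dt}\|u(t)\|_2^2=-2a\|u(t)\|_2^2\le 0$, hence $\|u(t)\|_2\le\|u_0\|_2$; and a standard virial computation, in which the real potential $E\cdot x$ cancels in $\frac{d}{dt}\|xu(t)\|_2^2$, gives $\|xu(t)\|_2\lesssim\|xu_0\|_2+\int_0^t\|\nabla u(s)\|_2\,ds$. So if $T<\infty$ and $\|\nabla u(t)\|_2$ stayed bounded on $[0,T)$, then $\|u(t)\|_{\mathcal{H}^1}$ would stay bounded and re-solving from a time close to $T$ would extend the solution past $T$ --- a contradiction. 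The only step that is not a verbatim repetition of the subcritical $H^1$ theory is the construction of $\mathcal{J}_E(t)$ and the verification of its three properties: the bare operator $x+2it\nabla$ does \emph{not} commute with the Stark Schr\"odinger operator, so one must carry the weighted-$L^2$ information through the Avron--Herbst change of variables. (One may instead apply that change of variables first, reduce \eqref{eq:dnls_stark} to the potential-free damped NLS, prove $\mathcal{H}^1$ local well-posedness there with the usual $x+2it\nabla$ argument, and pull back, using that the transform is a $t$-dependent but locally bounded isomorphism of $\mathcal{H}^1$.)
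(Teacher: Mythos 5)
Your argument is correct and is precisely the ``standard fixed point argument as in \cite{Ca} or \cite{CarNa,Z2012a}'' that the paper invokes while explicitly omitting the details: the Duhamel contraction with the conjugated vector field $\mathcal{J}_E(t)$ (the Carles--Nakamura device obtained by pushing $x+2it\nabla$ through the Avron--Herbst transform), the mass decay, and the variance bound $\|xu(t)\|_2\lesssim\|xu_0\|_2+\int_0^t\|\nabla u(s)\|_2\,ds$ (in which the real potential $E\cdot x$ indeed cancels) together give exactly the stated lifespan and blow-up alternative. Your parenthetical alternative --- apply the Avron--Herbst change of variables first, solve the potential-free damped NLS in $\Sigma$, and pull back using that the transform is a locally bounded isomorphism of $\mathcal{H}^1$ --- is equally valid and is the route most consistent with how the paper uses Proposition \ref{prop:ah} elsewhere.
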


We omit the detailed proof of the proposition, but instead  provide 
below  a description of the ``modified conservation laws''
 on the interval of existence $[0, T)$  
for the mass, energy, and momentum of the system.

\begin{proposition}
Let $u$ be a solution of the Cauchy problem \eqref{eq:dnls_stark} on $[0, T)$.
Let $\mathcal{M}(u)$,
$\mathcal{E}_0(u)$,
and $\mathcal{P}(u)$ be defined as in \eqref{eq:mass}--\eqref{eq:momentum},
respectively,
and let $\mathcal{E}_V(u)$ be the associated energy  (\ref{eq:stark_energy}). 
Then
	\begin{align}
	\mathcal{M}(u)
	&= e^{- a t} \mathcal{M}(u_0) \label{eq:mass_decay} \\
	\frac{d}{dt} \mathcal{E}_0(u)
	&= - 2 i \int_{\mathbb{R}^n} E \cdot u \nabla \overline{u} dx
	- 2 a \| \nabla u(t) \|_{L^2}^2
	+ 2 a \| u(t) \|_{L^{p + 1}}^{p + 1} \label{eq:energy_decay} \\
	\frac{d}{dt} \mathcal{E}_V(u)
	&= -2 a \int_{\mathbb{R}^n} E \cdot x |u|^2 dx
	- 2 a \| \nabla u(t) \|_{L^2}^2
	+ 2 a \| u(t) \|_{L^{p + 1}}^{p + 1} \label{eq:stark_energy_decay} \\
	\mathcal{P}(u)
	&= e^{-2at} \left( - tE \mathcal{M}(u_0)^2  + \mathcal{P}(u_0) \right). \label{eq:momentum_decay}
	\end{align}
\end{proposition}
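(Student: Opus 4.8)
The four identities are all of ``multiplier'' type, and the plan is to derive each by pairing \eqref{eq:dnls_stark} (or an $x$-derivative of it) against a suitable test function, integrating over $\mathbb{R}^n$, and separating real and imaginary parts. I would first reduce to data in $\cH^1\cap H^2$: for such $u_0$ the solution furnished by Proposition~\ref{p:dnls:VE} is strong, of class $C^1$ in $t$ with values in $L^2$, with $xu(t)$ and $\nabla u(t)$ staying in $L^2$ on $[0,T)$, so that every integration by parts below is legitimate and the weighted integrals $\int(E\cdot x)|u|^2$, $\int(E\cdot x)^2|u|^2$, $\int E\cdot u\,\nabla\overline u$ are finite; the general case $u_0\in\cH^1$ then follows by density and continuous dependence.

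For \eqref{eq:mass_decay} I would pair the equation with $\overline u$ and take imaginary parts: after one integration by parts the $-\Delta u$, $(E\cdot x)u$ and $-|u|^{p-1}u$ terms are purely real and drop out, leaving only the damping term and hence a closed linear ODE for $\mathcal{M}(u)$, which integrates to the asserted exponential decay. For \eqref{eq:energy_decay} I would differentiate $\mathcal{E}_0(u(t))$, use $\tfrac{d}{dt}\norm{\nabla u}_2^2=-2\re\int\overline{\Delta u}\,u_t$ and $\tfrac{d}{dt}|u|^{p+1}=(p+1)|u|^{p-1}\re(\overline u\,u_t)$ to collapse it to $-2\re\int\overline{(\Delta u+|u|^{p-1}u)}\,u_t$, and then substitute $u_t$ from \eqref{eq:dnls_stark}. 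The ``free NLS'' part $i(\Delta u+|u|^{p-1}u)$ then contributes zero (being $-2\re$ of a purely imaginary quantity); the damping part $-au$ contributes $-2a\norm{\nabla u}_2^2+2a\norm{u}_{p+1}^{p+1}$ after one integration by parts; and the Stark part $-i(E\cdot x)u$, after integrating by parts to move the $x$-weight off the Laplacian and using $\int E\cdot\nabla(|u|^2)=0$, contributes exactly $-2i\int E\cdot u\,\nabla\overline u$. For \eqref{eq:stark_energy_decay} I would write $\mathcal{E}_V(u)=\mathcal{E}_0(u)+\int(E\cdot x)|u|^2$ and differentiate the extra term via $\tfrac{d}{dt}\int(E\cdot x)|u|^2=2\re\int(E\cdot x)\overline u\,u_t$; substituting $u_t$, the $(E\cdot x)^2|u|^2$ and $(E\cdot x)|u|^{p+1}$ pieces are purely imaginary and vanish, the free piece produces $+2i\int E\cdot u\,\nabla\overline u$ which cancels the corresponding term coming from $\tfrac{d}{dt}\mathcal{E}_0$, and the damping piece leaves $-2a\int(E\cdot x)|u|^2$, yielding \eqref{eq:stark_energy_decay}.

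For \eqref{eq:momentum_decay} I would write $\mathcal{P}_j(u)=\tfrac{1}{2i}\int(\overline u\,\partial_j u-u\,\partial_j\overline u)$, differentiate in $t$, and integrate by parts in $x_j$ to symmetrize, obtaining $\tfrac{d}{dt}\mathcal{P}_j(u)=2\im\int\overline{u_t}\,\partial_j u$. Substituting $\overline{u_t}$: the Laplacian and nonlinear contributions are (after one further integration by parts) integrals of the exact derivatives $\partial_j|\nabla u|^2$ and $\partial_j|u|^{p+1}$ and hence vanish; the Stark contribution reduces, via $\re(\overline u\,\partial_j u)=\tfrac12\partial_j|u|^2$ and an integration by parts, to $-E_j\,\mathcal{M}(u)$; the damping contribution is $-2a\,\mathcal{P}_j(u)$. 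Thus $\tfrac{d}{dt}\mathcal{P}(u)=-E\,\mathcal{M}(u)-2a\,\mathcal{P}(u)$, a linear first-order system which, with \eqref{eq:mass_decay} already established, integrates --- using the integrating factor $e^{2at}$ --- to \eqref{eq:momentum_decay}.

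The step I expect to be the main obstacle is not any of these calculations but the a priori justification of differentiating $\mathcal{E}_0(u(t))$, $\mathcal{E}_V(u(t))$, $\mathcal{P}(u(t))$ in time and of discarding the boundary terms at spatial infinity, since the solution is only known to lie in $C([0,T);\cH^1)$. I would handle this by the $\cH^1\cap H^2$ approximation indicated above --- the $\cH^1\cap H^2$ local theory and the persistence of the $L^2$-weight being established exactly as in Proposition~\ref{p:dnls:VE} --- after which the identities for $\cH^1$ data are obtained by passing to the limit; the rest is bookkeeping of real and imaginary parts.
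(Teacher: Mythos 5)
Your proposal is correct and follows essentially the same route as the paper: pair the equation with the appropriate multiplier, separate real and imaginary parts, and solve the resulting linear ODEs (the paper carries out only the mass identity explicitly and asserts the rest as ``direct calculation''; your $\cH^1\cap H^2$ regularization remark supplies the justification the paper omits). One caveat: your computation, carried out, yields $\mathcal{M}(u)=e^{-2at}\mathcal{M}(u_0)$ and $\frac{d}{dt}\mathcal{P}(u)=-E\,\mathcal{M}(u)-2a\,\mathcal{P}(u)$, hence $\mathcal{P}(u)=e^{-2at}\left(-tE\,\mathcal{M}(u_0)+\mathcal{P}(u_0)\right)$; this agrees with the paper's own intermediate ODE $\frac{d}{dt}\|u(t)\|_{L^2}^2=-2a\|u(t)\|_{L^2}^2$ but not with the displayed \eqref{eq:mass_decay} and \eqref{eq:momentum_decay}, whose exponent $e^{-at}$ and factor $\mathcal{M}(u_0)^2$ appear to be typographical errors rather than defects of your argument.
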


\begin{proof} Equations (\ref{eq:mass_decay}) to (\ref{eq:momentum_decay}) 
 can be verified by straightforward calculations.
For instance,
to verify the identity \eqref{eq:mass_decay},
we multiply both sides of equation \eqref{eq:dnls_stark} by $\overline{u}$
and integrate them with respect to $x$ to obtain
	\begin{align*}
	\int_{\mathbb{R}^n} i u_t \overline{u} dx
	&= \int_{\mathbb{R}^n} \left( - \Delta u + E \cdot x u - |u|^{p - 1} u	- i a u \right) \overline{u} dx \\
	&= \int_{\mathbb{R}^n} \left( |\nabla u|^2 + E \cdot x |u|^2 - |u|^{p + 1} - i a |u|^2 \right) dx.
	\end{align*}
Since
	\begin{align*}
	\frac{d}{dt} \int_{\mathbb{R}^n} |u|^2 dx
	= 2 \re \int u_t \overline{u} dx
	= 2 \im \int i u_t \overline{u} dx
	= - 2 a \int_{\mathbb{R}^n} |u|^2 dx,
	\end{align*}
we obtain the o.d.e. 
	\begin{align*}
	\frac{d}{dt} \| u(t) \|_{L^2}^2
	= - 2 a \| u(t) \|_{L^2}^2, \qquad
	\| u(0) \|_{L^2}
	= \| u_0 \|_{L^2},
	\end{align*}
whose solution yields  \eqref{eq:mass_decay}.

Similarly,
equations \eqref{eq:energy_decay} and \eqref{eq:stark_energy_decay} follow from a direct calculation,
and equation \eqref{eq:momentum_decay} follows via solving the Cauchy problem
	\begin{align*}
	\frac{d}{dt} \mathcal{P}(u) = - E \mathcal{M}(u)^2 - 2 a \mathcal{P}(u), \qquad
	\mathcal{P}(u(0)) = \mathcal{P}(u_0).
	\end{align*}
\end{proof}

\begin{remark}\label{re:mass_decay}
If the system is damping-free,
i.e. $a = 0$,
then the quantities $\mathcal{M}(u)$ and $\mathcal{E}_V(u)$ are conserved.
Also,
from  \eqref{eq:mass_decay}--\eqref{eq:momentum_decay},
it seems that the Stark potential affects the dynamics of all but $\mathcal{M}(u)$.
Indeed,
equation \eqref{eq:mass_decay} also holds when $E = 0$
(which will be used in the proof of Theorem \ref{thm:dnls_stark_threshold} later).
Moreover,
equation \eqref{eq:mass_decay} indicates the nonexistence of solitary waves of the form $u(t, x) = e^{it} \phi(x)$,
for otherwise $\| u(t) \|_{L^2} = \| \phi \|_{L^2}$ does not decay in time.
\end{remark}

The following theorem is the main result in \cite{Dar},
and it will be applied in the proof of Theorem \ref{thm:dnls_stark_threshold}.

\begin{theorem}[Darwich \cite{Dar}]\label{thm:dnls_threshold}
Let $p=1+\frac{4}{n}$ and  $u_0 \in H^1(\mathbb{R}^n)$,  $n = 1, 2, 3, 4$. 
	\begin{enumerate}
	\item If $\| u_0 \|_{L^2} \leq \| Q \|_{L^2}$,
	then the solution to equation \eqref{eq:dnls} is global in $H^1$.
	
	\item There exists a $\delta_0 > 0$ such that,
	for all $a > 0$ and $\delta \in (0, \delta_0)$,
	there exists a $u_0 \in H^1$ with $\| u_0 \|_{L^2} = \| Q \|_{L^2} + \delta$,
	such that the solution to equation \eqref{eq:dnls} blows up in finite time in the log-log regime (\ref{eq:loglog_speed}).
	\end{enumerate}
\end{theorem}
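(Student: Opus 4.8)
I treat the two parts by different mechanisms. Part (a) is an energy/Gagliardo--Nirenberg argument that exploits the mass decay \eqref{eq:mass_decay}, while part (b) reduces, via the $L^2$-critical scaling, to a single small-$a$ construction obtained by perturbing the Merle--Rapha\"el log-log analysis.

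For part (a), recall the sharp Gagliardo--Nirenberg (Weinstein) inequality, which for $p=1+\frac4n$ gives $\frac{2}{p+1}\norm{f}_{p+1}^{p+1}\le\norm{\nabla f}_2^2(\norm{f}_2/\norm{Q}_2)^{4/n}$ with equality at $f=Q$, hence the coercivity
\[
\cE_0(f)\ge\norm{\nabla f}_2^2\Big(1-(\norm{f}_2/\norm{Q}_2)^{4/n}\Big).
\]
Let $\varphi$ solve \eqref{eq:dnls} with $\norm{u_0}_2\le\norm{Q}_2$. Since \eqref{eq:mass_decay} holds also for $E=0$, we have $\norm{\varphi(t)}_2^2=e^{-at}\norm{u_0}_2^2$, so the flow is strictly below threshold for $t>0$. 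For strictly subthreshold data, set $\eta_0:=(\norm{u_0}_2/\norm{Q}_2)^{4/n}<1$; then $\cE_0(\varphi(t))\ge0$ and $\norm{\nabla\varphi(t)}_2^2\le\cE_0(\varphi(t))/(1-\eta_0)$. Inserting this into the energy identity \eqref{eq:energy_decay} with $E=0$, which in the critical case reads $\frac{d}{dt}\cE_0(\varphi)=\frac{4a}{n}\norm{\nabla\varphi}_2^2-a(2+\frac4n)\cE_0(\varphi)$, and dropping the nonpositive last term gives $\frac{d}{dt}\cE_0(\varphi)\le\frac{4a}{n(1-\eta_0)}\cE_0(\varphi)$. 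Gronwall then bounds $\cE_0(\varphi(t))$, and hence $\norm{\nabla\varphi(t)}_2$, on every finite interval, so the blow-up alternative of Proposition \ref{p:dnls:VE} yields $T_{\max}=\iy$. The equality case $\norm{u_0}_2=\norm{Q}_2$ follows by restarting the flow at any $t_0\in(0,T_{\max})$, where $\norm{\varphi(t_0)}_2<\norm{Q}_2$ strictly.

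For part (b), I first remove the parameter $a$. By the $L^2$-critical scaling noted in the introduction, if $\varphi$ solves $NLS_a$ then $\varphi_\lam(t,x)=\lam^{-n/2}\varphi(t/\lam^2,x/\lam)$ solves $NLS_{a/\lam^2}$; this map preserves $\norm{\cdot}_2$ and carries the log-log law \eqref{eq:loglog_speed} with blow-up time $T^*$ to the same law with blow-up time $\lam^2T^*$, since the $\lam^2$-rescaling of the argument of the outer logarithm is absorbed by the double logarithm. Hence it suffices to construct, for one fixed small $a_1>0$ and every $\delta\in(0,\delta_0)$, a log-log blow-up solution of mass $\norm{Q}_2+\delta$: taking $\lam=\sqrt{a_1/a}$ then produces the desired solution for arbitrary $a>0$ at the same mass, and $\delta_0$, inherited from the $a_1$-construction, is automatically uniform in $a$.

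It remains to build the solution for small $a$. Writing $v=e^{at}\varphi$ turns \eqref{eq:dnls} into the non-autonomous focusing NLS $iv_t=-\De v-e^{-\frac{4a}{n}t}|v|^{4/n}v$, which for small $a$ is a vanishing perturbation of the classic NLS on any finite time interval. I would then run the Merle--Rapha\"el modulation and bootstrap scheme: decompose $v$ near the rescaled profile, derive the modulation equations for the scaling, phase and translation parameters (and the auxiliary parameter $b$), and control the remainder through the associated Lyapunov/monotonicity functional under the spectral property of Remark \ref{re:spectral}, so that monotonicity of the scaling parameter delivers the log-log law \eqref{eq:loglog_speed}. The main obstacle is exactly this last step: one must show that the damping (equivalently, the factor $e^{-\frac{4a}{n}t}$) enters the modulation identities and the monotonicity functional only as lower-order, sign-favorable corrections that are absorbed by the a priori estimates uniformly up to the blow-up time, so that the sharp log-log speed---not merely finite-time blow-up---survives; the $H^1$-stability of the log-log regime \cite{Ra05stabi} is the structural input that makes this robust.
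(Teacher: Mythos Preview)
This theorem is not proved in the paper: it is quoted as the main result of \cite{Dar}, and the paper only gives a one-paragraph description of Darwich's method (the $Q_b$ modulation ansatz \`a la Merle--Rapha\"el, with parameters fixed by the spectral property). So there is no ``paper's own proof'' to compare against beyond that sketch.

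Your part (a) is correct and is the standard Weinstein/Gagliardo--Nirenberg argument combined with the mass decay; the energy differential identity you compute for $E=0$ is right, and the Gronwall closure is clean. The restart trick for the threshold case $\|u_0\|_2=\|Q\|_2$ is also fine. This is almost certainly what Darwich does as well.

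For part (b), your scaling reduction to a single small $a_1$ is a legitimate and useful simplification---this is exactly the observation recorded in \cite[Remark~3.2]{Dar} and recalled in the paper's introduction---and your check that the log-log rate survives the rescaling is correct. But after that reduction you have only identified the method (run the Merle--Rapha\"el bootstrap on the non-autonomous equation $iv_t=-\Delta v-e^{-4at/n}|v|^{4/n}v$ and show the damping contributes lower-order terms in the modulation and Lyapunov estimates) without carrying it out. That analysis is the actual content of \cite{Dar}, which adapts \cite{MerRa04boot,MeRa06sharp} to the damped setting; in particular, controlling the extra terms generated by the damping in the local virial/monotonicity functional is where the restriction $n\le4$ enters (cf.\ the interpolation step in \cite[Lemma~6.2]{Dar}). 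So your outline is on target and matches the paper's description of Darwich's approach, but it is a sketch rather than a proof: the substantive technical work---exactly the step you flag as ``the main obstacle''---remains to be done.
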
  

The proof of Theorem \ref{thm:dnls_threshold} is a modification of the approach in \cite{MerRa04boot} to \cite{MeRa06sharp}. 
The initial  ansatz  for the profile near $t\to T^*$ is given as
\begin{align*} 
& \vphi(t,x)= \frac{e^{i\theta(t)}}{\lam(t)^{d/2}} \left(Q_{b(t)}+ \eta\right) (t, \frac{x-y(t)}{\lam(t) } )  \, 
\end{align*} 
for some geometrical parameters $(b, \lam, y, \theta)=(b(t),\lam(t),y(t),\theta(t))\in \R_+\times \R_+\times \R^n\times \R$
with $\lam(t)\sim 1/\norm{\nabla u(t)}_2$.   These profiles $Q_b$ are regularization of the 
self-similar solutions of (\ref{eq:nls}) that obeys the elliptic equation 
\begin{align*}
  \Delta Q_b  + ib \left( \frac{n}{2}  +x\cdot\nabla \right) Q_b  + |Q_b|^\frac{4}{n} Q_b=Q_b\,.
	\end{align*} 
Thus  $Q_b$ are resultantly  suitable deformation of $Q$ up to some degeneracy of the problem (\ref{eQ0}). 
The geometrical parameters here
 are uniquely defined per some orthogonality conditions in the {\em Spectral Property} \cite[p.164]{MerRa0},
 or \cite{BaHuZh2019a,YangRouZh2018}.    
 
\begin{remark} 
  The limit on the dimension $n\le 4$ is required in  an interpolation inequality in 
the proof of \cite[Lemma 6.2]{Dar},  see also similar proof for \cite[Lemma 4.2]{DarwMolin16}.   
 \end{remark}
 
\begin{remark}\label{re:spectral} The {\em Spectral Property } has not been analytically verified for all dimensions.
In one dimension it was  proved by Merle and Rapha\"el \cite{MerRa0} using the explicit expression of $Q$ to (\ref{eQ0}).  
 The  recent progress in higher dimensions is attributed to \cite{FibMeRa06}, and  
 \cite{YangRouZh2018}, 
where is given an improved numerically-assisted proof for $n \leq 10$ and also for $n = 11, 12$ in the radial case. 
See also the discussions on the rotational NLS in \cite{BaHuZh2019a,BHHZ23t},
where the spectral property is required.  
\end{remark} 

For the proof of Theorem \ref{a0:blup:stark} we need the analogous result in \cite{Dinh21b}, 
where some blow-up conditions for (\ref{eq:dnls}) were obtained. 
Let $J(t):= \int |x|^2 |\vphi|^2$ be the variance. 
In the $L^2$-critical regime $p = 1 + \frac{4}{n}$, the virial identity for the free NLS (\ref{eq:nls}) reads
$\frac{d^2}{dt^2} J(t)= 8 \cE_0(u_0)$, which  can be used to show that 
$u_0\mapsto u(t)$ is a blow-up solution of (\ref{eq:nls}) if $\cE_0(u_0)<0$.  
This result was extended for the dNLS (\ref{a:dnls}) in the absence of $V_E$ in \cite[Theorem 1.2]{Dinh21b} 
\begin{align}\label{a:dnls}
	i \varphi_t = - \Delta \varphi - |\varphi|^{4/n} \varphi - i a \varphi, \qquad
	\varphi(0) = \vphi_0\,.
	\end{align}   
The proof is based on a localized virial identity for the dNLS. Denote \\
\mbox{$\Sigma:=H^1\cap \{u\in L^2: \int |x|^2 |u|^2<\iy\}$.} 

\begin{theorem}[Dinh \cite{Dinh21b}]\label{t:dinh:E0<0}  Let $p=1+4/n$, $n\ge 1$.  
Suppose $u_0\in \Sigma$ and $\cE_0(\vphi_0)<0$.   
 Then, there exists a positive $a_*=a_*(\norm{u_0}_{H^1})$ such that  for all $a\in (0,a_*)$,    
the corresponding solution    
$u(t)$ of (\ref{a:dnls})  in $C( [0,T^*), \Sigma )$   
 blows up finite time on $[0,T^*)$. 
\end{theorem}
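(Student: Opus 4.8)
The plan is to run a virial/convexity (Glassey-type) argument, modified to absorb the friction term, so I begin by recording how the free-NLS conservation laws degrade under the damping. Writing \eqref{a:dnls} as $u_t=i\Delta u+i|u|^{4/n}u-au$ and pairing with $\overline u$, $\overline{\Delta u}$, and $|x|^2\overline u$, direct computations give the exponential mass decay $\mathcal M(u(t))=e^{-2at}\mathcal M(u_0)$, the energy evolution
\begin{equation*}
\frac{d}{dt}\mathcal E_0(u(t))=-2a\big(\|\nabla u\|_{L^2}^2-\|u\|_{L^{p+1}}^{p+1}\big)=\frac{4a}{n}\|\nabla u\|_{L^2}^2-\frac{2a(n+2)}{n}\,\mathcal E_0(u(t)),
\end{equation*}
and, since $u_0\in\Sigma$ keeps the variance finite on $[0,T^*)$ (the integrations by parts being justified by a cutoff $\chi_R(x)|x|^2$ with $R\to\infty$, which is the \emph{localized} virial), the damped virial identity
\begin{equation*}
\Big(\frac{d}{dt}+2a\Big)^{2}J(t)=8\,\mathcal E_0(u(t)),\qquad\text{equivalently}\qquad \frac{d^{2}}{dt^{2}}\big(e^{2at}J(t)\big)=8\,e^{2at}\,\mathcal E_0(u(t)).
\end{equation*}
At $a=0$ these reduce to conserved mass/energy and $J''=8\mathcal E_0(u_0)$, which is Glassey's setting; the task is to keep the right-hand side negative once $a>0$.

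The convexity step is then immediate. Put $\tilde J:=e^{2at}J\ge 0$. If one knows $\mathcal E_0(u(t))\le -c_0<0$ throughout the existence interval, then $\tilde J''\le -8c_0e^{2at}$, and integrating twice shows $\tilde J(t)\to-\infty$ at some finite $T_c$, contradicting $\tilde J\ge0$; hence the solution cannot be global, and the blow-up alternative upgrades this to $\|\nabla u(t)\|_{L^2}\to\infty$ at $T^*\le T_c<\infty$. Expanding the exponentials for small $a$, the most singular $O(a^{-1})$ contributions cancel and $T_c$ stays finite and bounded (of the order of the free blow-up time, determined by $J(u_0)$, $J'(0)$, and $c_0$) as $a\to0$, so the only thing that can go wrong is the energy climbing back towards zero.

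Securing $\mathcal E_0(u(t))\le -c_0$ is the heart of the matter, and here the damping works against us: the energy identity shows that during concentration, where $\|\nabla u\|_{L^2}^2\gg1$ and the profile resembles $Q$, the term $\tfrac{4a}{n}\|\nabla u\|_{L^2}^2$ pumps energy \emph{upward}. I would fix $c_0=\tfrac12|\mathcal E_0(u_0)|$ and run a continuity argument: since $\mathcal E_0(u_0)<0$ forces $\|u_0\|_{L^2}>\|Q\|_{L^2}$ by the sharp Gagliardo--Nirenberg inequality, and the mass only decays, the instantaneous energy can remain negative at least while $e^{-2at}\|u_0\|_{L^2}^2>\|Q\|_{L^2}^2$, i.e. on a window of length $\sim a^{-1}\log(\|u_0\|_{L^2}/\|Q\|_{L^2})$ that is long for small $a$ compared with the $O(1)$ time $T_c$. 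Within this window I would bound the total energy increase by $a\!\int_0^{t}\big(\tfrac4n\|\nabla u\|_{L^2}^2+\tfrac{2(n+2)}{n}|\mathcal E_0|\big)ds$, use the a priori lower bound $\mathcal E_0(u(s))\ge e^{-2a(n+2)s/n}\mathcal E_0(u_0)$ to control the energy factor, and absorb the gradient integral; choosing $a$ small then keeps the increase below $\tfrac12|\mathcal E_0(u_0)|$, so that $\mathcal E_0\le -c_0$ persists up to $T_c$ and the convexity step closes.

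The genuine obstacle is precisely the uniform-in-time control of the dissipative gradient integral $a\int\|\nabla u\|_{L^2}^2$: because this is a bona fide PDE quantity, not reducible to the ODE for $J$, bounding it on the relevant interval requires the localized virial estimates of \cite{Dinh21b}, combined with Gagliardo--Nirenberg to trap the nonlinearity. It is exactly this estimate that dictates how small the friction must be, and it is the source of the dependence $a_*=a_*(\|u_0\|_{H^1})$: once the gradient integral and the quantities $J(u_0)$, $J'(0)=4\,\mathrm{Im}\!\int\overline{u_0}\,(x\cdot\nabla u_0)-2aJ(u_0)$, $\mathcal E_0(u_0)$ are controlled by $\|u_0\|_{H^1}$, one selects $a_*$ so that blow-up is forced before the damping can neutralize the negative energy.
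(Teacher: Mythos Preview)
The paper does not supply its own proof of this theorem; it quotes the result from \cite{Dinh21b} and only remarks that ``the proof is based on a localized virial identity for the dNLS.'' So there is no detailed argument in the paper to compare against. Your virial/Glassey framework is consistent with that one-line description, and the identities you record are correct: the mass decay, the rewriting of the energy evolution as $\mathcal E_0'=\tfrac{4a}{n}\|\nabla u\|_2^2-\tfrac{2a(n+2)}{n}\mathcal E_0$, the damped virial $(\partial_t+2a)^2J=8\mathcal E_0$ and its equivalent form $(e^{2at}J)''=8e^{2at}\mathcal E_0$, and the observation that the $O(a^{-1})$ terms cancel when one integrates the latter twice.

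The gap is exactly where you locate it, and it is genuine. Closing the bootstrap $\mathcal E_0(u(t))\le -c_0$ on $[0,T_c]$ requires
\[
\frac{4a}{n}\int_0^{t}\|\nabla u(s)\|_2^2\,ds\;\le\;\tfrac12|\mathcal E_0(u_0)|,
\]
and you offer no bound on the gradient integral. Because $\mathcal E_0(u_0)<0$ forces $\|u_0\|_2>\|Q\|_2$, the sharp Gagliardo--Nirenberg inequality does \emph{not} close to control $\|\nabla u\|_2^2$ from energy and mass, and on the putative blow-up window this integral can diverge (for the log-log rate, $\int^{T^*}\|\nabla u\|_2^2=\infty$). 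Rewriting the energy identity as $\|\nabla u\|_2^2=\tfrac{n+2}{2}\mathcal E_0+\tfrac{n}{4a}\mathcal E_0'$ and integrating only reproduces the energy change itself, so the circularity is real. Your proposed fix --- invoke the localized virial estimates of \cite{Dinh21b} --- amounts to citing the theorem you are asked to prove. That estimate is precisely the nontrivial content of Dinh's argument (and, in a closely related form, of Ohta--Todorova \cite{OhTo09}); to complete the proof you would have to reproduce that step, not reference it. Note also that your closing sentence invokes $J(u_0)$ and $J'(0)$, which depend on $\|x u_0\|_2$ and not merely on $\|u_0\|_{H^1}$, so the dependence of $a_*$ you describe does not match the stated $a_*=a_*(\|u_0\|_{H^1})$ without further work.
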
  

The proof of Theorem \ref{a0:blup:stark} is based on  
  a simple application of the Avron-Herbst formula (\ref{eq:ah_inv}) 
and Theorem \ref{t:dinh:E0<0}.  
  The $A$-$H$ transform allows us to 
   convert solutions  $u(t)$ of (\ref{eq:dnls_stark}) into solutions $\vphi$ of (\ref{a:dnls}). 
We leave the proof as an easy exercise for the reader.

\section{Avron-Herbst Formula and Proof of Theorem \ref{thm:dnls_stark_threshold}}\label{sect:proof_main_theorem}

First we introduce the Avron-Herbst formula, as is well-known
  \cite{AvHerbst, CarNa, CyFroeKirSi}.

\begin{proposition}\label{prop:ah}
Let $T \in (0, \infty]$.
If $\varphi$ is the solution to the Cauchy problem \eqref{eq:dnls} on $[0, T)$,
then for $E \in \mathbb{R}^n \setminus \{ 0 \}$,
the function
	\begin{align}\label{eq:ah}
	u(t, x)
	:= \varphi \left( t, x + t^2 E \right) e^{- i \left( t E \cdot x + \frac{|E|^2 t^3}{3} \right)}
	\end{align}
is the solution to the Cauchy problem \eqref{eq:dnls_stark} on $[0, T)$.

Conversely,
if $u$ is the solution to the Cauchy problem \eqref{eq:dnls_stark} on $[0, T)$,
then the function
	\begin{align}\label{eq:ah_inv}
	\varphi(t, x)
	:= u \left( t, x - t^2 E \right) e^{i \left( t E \cdot x - \frac{2 |E|^2 t^3}{3} \right)}
	\end{align}
is the solution to the Cauchy problem \eqref{eq:dnls} on $[0, T)$.
\end{proposition}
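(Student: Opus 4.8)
The plan is to establish the forward direction by a direct chain-rule substitution, and then read off the converse by algebraically inverting the change of variables. Conceptually this is just the dissipative nonlinear counterpart of the classical linear Avron--Herbst identity, by which the Stark propagator $e^{-it(-\Delta+E\cdot x)}$ factors as a gauge transformation composed with a space translation and the free propagator; here the nonlinearity is gauge-covariant and the damping term is a scalar multiple of the identity, so neither interferes with the transformation, and the friction coefficient $a$ is carried over untouched.

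First I would write $\psi(t,x):=\varphi(t,x+t^2E)$ and $\theta(t,x):=tE\cdot x+\frac{|E|^2t^3}{3}$, so that $u=\psi\,e^{-i\theta}$ by \eqref{eq:ah}. Since translation of the spatial argument commutes with $\nabla$ and $\Delta$, one has $\nabla_x\psi=(\nabla\varphi)(t,x+t^2E)$ and $\Delta_x\psi=(\Delta\varphi)(t,x+t^2E)$, while $\partial_t\psi=(\partial_t\varphi)(t,x+t^2E)+2tE\cdot(\nabla\varphi)(t,x+t^2E)$ acquires a drift term. Because $\theta$ is affine in $x$ we have $\Delta_x\theta=0$ and $\nabla_x\theta=tE$, hence $\Delta_x u=\big(\Delta_x\psi-2itE\cdot\nabla_x\psi-t^2|E|^2\psi\big)e^{-i\theta}$ and $\partial_t(e^{-i\theta})=-i\big(E\cdot x+|E|^2t^2\big)e^{-i\theta}$. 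Substituting $i\varphi_t=-\Delta\varphi-|\varphi|^{p-1}\varphi-ia\varphi$ evaluated at $x+t^2E$ into $iu_t=\big(i\partial_t\psi+(\partial_t\theta)\psi\big)e^{-i\theta}$ and collecting terms, three matchings occur: the drift $2itE\cdot\nabla_x\psi$ coming from $iu_t$ coincides with the one produced by $-\Delta u$; the purely temporal part $|E|^2t^2\psi$ of $(\partial_t\theta)\psi$ coincides with the $|\nabla_x\theta|^2=t^2|E|^2$ term inside $-\Delta u$, the only net remainder being the Stark potential $(E\cdot x)u$ — and this is precisely why the cubic phase constant must be $|E|^2/3$, since $\partial_t\!\big(|E|^2t^3/3\big)=|E|^2t^2$; and multiplication by the unimodular factor $e^{-i\theta}$ commutes through $|u|^{p-1}u$ and through $-iau$. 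Adding up gives $iu_t=-\Delta u+(E\cdot x)u-|u|^{p-1}u-iau$ on all of $[0,T)$, so maximal existence times correspond; evaluating at $t=0$, where the shift and the phase are trivial, gives $u(0)=\varphi(0)=u_0$.

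For the converse I would observe that $\varphi\mapsto u$ in \eqref{eq:ah} is a bijection with inverse $\varphi(t,x)=u(t,x-t^2E)\,e^{i\theta(t,x-t^2E)}$; since $\theta(t,x-t^2E)=tE\cdot x-\frac{2|E|^2t^3}{3}$, this is exactly formula \eqref{eq:ah_inv}, so a solution $u$ of \eqref{eq:dnls_stark} is carried back to a solution $\varphi$ of \eqref{eq:dnls} (equivalently, one reruns the computation above in reverse).

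I do not anticipate a genuine obstacle here: beyond the bookkeeping above, the closest thing to a subtlety is checking that the transformation preserves the relevant function space. For each fixed $t$, the translation $x\mapsto x\pm t^2E$ maps $\mathcal{H}^1$ onto itself, and so does multiplication by $e^{\mp i\theta}$, since $\nabla(e^{-i\theta}\psi)=e^{-i\theta}(\nabla\psi-itE\psi)\in L^2$ whenever $\psi\in H^1$ and $|x|\,|e^{-i\theta}\psi|=|x|\,|\psi|$; moreover these maps depend continuously on $t$. Consequently $u\in C([0,T),\mathcal{H}^1)$ precisely when $\varphi$ belongs to the corresponding $\mathcal{H}^1$-valued solution class, and symmetrically in the reverse direction. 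As usual, one first performs the identity for smooth solutions and then extends it to the energy-class solutions of Proposition \ref{p:dnls:VE} (and their counterparts for \eqref{eq:dnls}) by density and continuity.
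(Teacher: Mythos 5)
Your proof is correct and follows essentially the same route as the paper: a direct chain-rule computation of $u_t$ and $\Delta u$ for the gauge-translated function, substitution of \eqref{eq:dnls}, and term-by-term cancellation, with the converse obtained by inverting the change of variables. The extra remarks on the phase algebra $\theta(t,x-t^2E)=tE\cdot x-\tfrac{2|E|^2t^3}{3}$ and on preservation of $\mathcal{H}^1$ are consistent with (and slightly more explicit than) the paper's argument.
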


\begin{proof}
Both can be verified by direct computation.
For example,
to verify that $u$ in \eqref{eq:ah} solves the  problem \eqref{eq:dnls_stark},
obviously we have $u(0) = \varphi(0) = u_0$,
and
	\begin{align*}
	u_t (t, x)
	&= \left[ \varphi_t \left( t, x + t^2 E \right)
	+ 2 t E \cdot \nabla \varphi \left( t, x + t^2 E \right) \right]
	e^{- i \left( t E \cdot x + \frac{|E|^2 t^3}{3} \right)} \\
	&\qquad + \varphi \left( t, x + t^2 E \right)
	e^{- i \left( t E \cdot x + \frac{|E|^2 t^3}{3} \right)}
	\left[ - i \left( E \cdot x + |E|^2 t^2 \right) \right] \\
	&= \left[ \varphi_t \left( t, x + t^2 E \right)
	+ 2 t E \cdot \nabla \varphi \left( t, x + t^2 E \right)
	- i E \cdot x \varphi \left( t, x + t^2 E \right) \right. \\
	&\qquad \left. - i |E|^2 t^2 \varphi \left( t, x + t^2 E \right) \right]
	e^{- i \left( t E \cdot x + \frac{|E|^2 t^3}{3} \right)}
	\end{align*}
and
	\begin{align*}
	\Delta u(t, x)
	&= \Delta \left[ \varphi \left( t, x + t^2 E \right) \right]
	e^{- i \left( t E \cdot x + \frac{|E|^2 t^3}{3} \right)}
	+ 2 \nabla \left[ \varphi \left( t, x + t^2 E \right) \right]
	\cdot \nabla \left[ e^{- i \left( t E \cdot x + \frac{|E|^2 t^3}{3} \right)} \right] \\
	&\qquad + \varphi \left( t, x + t^2 E \right)
	\Delta \left[ e^{- i \left( t E \cdot x + \frac{|E|^2 t^3}{3} \right)} \right] \\
	&= \left[ \Delta \varphi \left( t, x + t^2 E \right)
	- 2 i t E \cdot \nabla \varphi \left( t, x + t^2 E \right)
	- t^2 |E|^2 \varphi \left( t, x + t^2 E \right) \right]
	e^{- i \left( t E \cdot x + \frac{|E|^2 t^3}{3} \right)}.
	\end{align*}
Then equation \eqref{eq:dnls_stark} holds if we bring $u$,
$u_t$,
and $\Delta u$ into both sides and use equation \eqref{eq:dnls} for $\varphi$.
\end{proof}

Now we prove Theorem \ref{thm:dnls_stark_threshold} using the Avron-Herbst transform (\ref{eq:ah})-(\ref{eq:ah_inv}).

\begin{proof}[Proof of Theorem \ref{thm:dnls_stark_threshold}]
To prove (a),
note that if $\| u_0 \|_{L^2} \leq \| Q \|_{L^2}$,
then according to Theorem \ref{thm:dnls_threshold},
there exists a global solution $\varphi$ to the Cauchy problem \eqref{eq:dnls} with $\varphi(0) = u_0$.
Applying formula \eqref{eq:ah} to $\varphi$,
we obtain a global solution $u$ to the  problem \eqref{eq:dnls_stark}.

To prove (b),
let $\alpha_0 := \delta_0$ in Theorem \ref{thm:dnls_threshold}.
Then for each $\al \in (0, \alpha_0)$,
there exists an initial value $\| u_0 \|_{L^2} = \| Q \|_{L^2} + \al$
with which the solution $\varphi$ to the Cauchy problem \eqref{eq:dnls}
blows up in finite time $T^*$ at the log-log speed \eqref{eq:loglog_speed}.
Then we obtain $u$ in terms of $\varphi$ on $[0, T^*)$ through formula \eqref{eq:ah},
and $[0, T^*)$ is the maximal time interval for $u$
because otherwise we can use formula \eqref{eq:ah_inv} to extend $\varphi$ beyond $T^*$.
To show that $u$ blows up in the log-log regime,
we have
	\begin{align*}
	\nabla u(t, x)
	&= \nabla \varphi \left( t, x + t^2 E \right) e^{- i \left( t E \cdot x + \frac{|E|^2 t^3}{3} \right)}
	+ \varphi \left( t, x + t^2 E \right) e^{- i \left( t E \cdot x + \frac{|E|^2 t^3}{3} \right)} (- i t E ) \\
	&=: \textup{I} + \textup{II}.
	\end{align*}
It is easy to see that
	\begin{align*}
	\| \textup{I} \|_{L^2}
	= \| \nabla \varphi(t) \|_{L^2}
	\approx \left( \frac{\log| \log( T^* - t) |}{T^* - t} \right)^{\frac{1}{2}} \qquad
	\text{as } \ t \rightarrow T^*.
	\end{align*}
Also,
by the identiy \eqref{eq:mass_decay}
(and Remark \ref{re:mass_decay}),
we have
	\begin{align*}
	\| \textup{II} \|_{L^2}
	= t |E| \left\| \varphi \left( t \right) \right\|_{L^2}
	= t e^{-a t} |E| \left\| u_0 \right\|_{L^2},
	\end{align*}
which is a bounded function on $[0, \infty)$ with maximum occurring at $t = \frac{1}{a}$.
Combining both estimates,
we have
	\begin{align*}
	\| \nabla u(t) \|_{L^2}
	\approx \left\| \text{I} \right\|_{L^2} + \left\| \text{II} \right\|_{L^2}
	\approx \left( \frac{\log| \log( T^* - t) |}{T^* - t} \right)^{\frac{1}{2}} \qquad
	\text{as } \ t \rightarrow T^*.
	\end{align*}
\end{proof}

\begin{remark}\label{re:minimal_mass_remark} 
We have noted in the introduction section that,  
in the presence of damping, 
Theorem \ref{eq:dnls_stark} (a) indicates that there 
 always exists a global solution of \eqref{eq:dnls_stark} when $\| u_0 \|_{2} = \| Q \|_{2}$.  
Indeed,  we will give another proof of this property in the next section as a corollary of a limiting profile result
(Theorem \ref{thm:limiting_profile}), 
which basically says that the mass of a blowup solution will concentrate and will be no less than $\| Q \|_{2}$.
This, 
together with the mass decay \eqref{eq:mass_decay},
will explain why there are no blowup solutions at the level $\| Q \|_{2}$, 
see Corollary \ref{cor:proof_a}.
\end{remark}

\section{A Limiting Profile Result for Equation (\ref{eq:dnls_stark}) }

In this section,
we will first prove a limiting profile result about the mass concentration of blow-up solutions of equation (\ref{eq:dnls_stark}) in
Theorem \ref{thm:limiting_profile}, 
and then give an alternative proof of Theorem \ref{thm:dnls_stark_threshold} (a) for all dimensions.
Throughout this section, 
we assume $p = 1 + \frac{4}{n}$.

In \cite{HmiKe},
Hmidi and Keraani proved a refined version of compactness lemma adapted to the analysis of the blow-up phenomenon
for the Cauchy problem \eqref{eq:nls} in the $L^2$-critical case.
Their result is as follows.

\begin{theorem}[\cite{HmiKe}]\label{thm:compactness_lemma}
Let $\{ v_k \}$ be a bounded sequence in $H^1(\mathbb{R}^n)$ such that
	\begin{align}\label{eq:upper_bound}
	\limsup_{k \rightarrow \infty} \| \nabla v_k \|_{L^2} \leq M
	\end{align}
and
	\begin{align}\label{eq:lower_bound}
	\limsup_{k \rightarrow \infty} \| v_k \|_{L^{2 + \frac{4}{n}}} \geq m.
	\end{align}
Then there exists a sequence $\{ x_k \} \subseteq \mathbb{R}^n$ such that
(up to a subsequence)
	\begin{align*}
	v_k (\cdot + x_k) \rightharpoonup V \qquad
	\textup{and} \qquad
	\| V \|_{L^2} \geq \left( \frac{n}{n + 2} \right)^\frac{n}{4} \frac{m^{\frac{n}{2} + 1} + 1}{M^{\frac{n}{2}}} \| Q \|_{L^2}.
	\end{align*}
\end{theorem}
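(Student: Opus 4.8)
The plan is to derive this from the profile decomposition for bounded sequences in $H^1(\R^n)$, combined with the sharp Gagliardo--Nirenberg inequality. First I would record the profile decomposition in the form adapted to the (non-compact) embedding $H^1\hr L^{2+4/n}$: since $2+\tfrac4n$ lies strictly below the Sobolev exponent $\tfrac{2n}{n-2}$ when $n\ge 3$ (and $H^1\hr L^q_{loc}$ is compact for every finite $q$ when $n=1,2$), the only loss of compactness is through translations, so the decomposition carries no scaling parameter. Thus, after extracting a subsequence, one obtains profiles $V^{(j)}\in H^1$ and cores $\{x_k^{(j)}\}_k\subset\R^n$ with $|x_k^{(j)}-x_k^{(l)}|\to\iy$ for $j\ne l$, such that for every $\ell$,
\[ v_k=\sum_{j=1}^{\ell}V^{(j)}(\cdot-x_k^{(j)})+r_k^\ell,\qquad r_k^\ell(\cdot+x_k^{(j)})\rightharpoonup 0 \ \text{ in }\ H^1\ \ (j\le\ell), \]
with $\limsup_k\norm{r_k^\ell}_{L^{2+4/n}}\to 0$ as $\ell\to\iy$, together with the asymptotic orthogonality relations
\[ \norm{v_k}_{L^2}^2=\sum_{j\le\ell}\norm{V^{(j)}}_{L^2}^2+\norm{r_k^\ell}_{L^2}^2+o_k(1),\qquad \norm{\nabla v_k}_{L^2}^2=\sum_{j\le\ell}\norm{\nabla V^{(j)}}_{L^2}^2+\norm{\nabla r_k^\ell}_{L^2}^2+o_k(1), \]
and the Brezis--Lieb type identity $\lim_k\norm{v_k}_{L^{2+4/n}}^{2+4/n}=\sum_{j\ge1}\norm{V^{(j)}}_{L^{2+4/n}}^{2+4/n}$ (after a further subsequence, using that the remainder is asymptotically negligible in $L^{2+4/n}$ and the cores are pairwise escaping).

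Two consequences I would extract right away: $\sum_{j\ge1}\norm{\nabla V^{(j)}}_{L^2}^2\le\limsup_k\norm{\nabla v_k}_{L^2}^2\le M^2$, and $\sum_{j\ge1}\norm{V^{(j)}}_{L^2}^2\le\lim_k\norm{v_k}_{L^2}^2<\iy$. In particular $\norm{V^{(j)}}_{L^2}\to 0$, so $\sup_j\norm{V^{(j)}}_{L^2}$ is attained; reorder the profiles so that it is attained at $j=1$.

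Next I would feed the sharp Gagliardo--Nirenberg inequality, $\norm{f}_{L^{2+4/n}}^{2+4/n}\le\frac{n+2}{n\,\norm{Q}_{L^2}^{4/n}}\norm{\nabla f}_{L^2}^2\norm{f}_{L^2}^{4/n}$ (the optimal constant being $\frac{n+2}{n}\norm{Q}_{L^2}^{-4/n}$ by the Pohozaev identities for $Q$, which give $\norm{\nabla Q}_{L^2}^2=\tfrac n2\norm{Q}_{L^2}^2$), into each profile and sum, bounding $\norm{V^{(j)}}_{L^2}^{4/n}\le\norm{V^{(1)}}_{L^2}^{4/n}$:
\[ \sum_{j\ge1}\norm{V^{(j)}}_{L^{2+4/n}}^{2+4/n}\le\frac{n+2}{n\,\norm{Q}_{L^2}^{4/n}}\norm{V^{(1)}}_{L^2}^{4/n}\sum_{j\ge1}\norm{\nabla V^{(j)}}_{L^2}^2\le\frac{(n+2)M^2}{n\,\norm{Q}_{L^2}^{4/n}}\norm{V^{(1)}}_{L^2}^{4/n}. \]
Passing to the subsequence along which $\norm{v_k}_{L^{2+4/n}}\to\limsup\ge m$, the left-hand side is $\ge m^{2+4/n}$; solving for $\norm{V^{(1)}}_{L^2}$ and using $(2+\tfrac4n)\tfrac n4=\tfrac n2+1$ yields precisely $\norm{V^{(1)}}_{L^2}\ge\big(\tfrac{n}{n+2}\big)^{n/4} m^{n/2+1}M^{-n/2}\norm{Q}_{L^2}$. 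I would then set $x_k:=x_k^{(1)}$ and $V:=V^{(1)}$; that $v_k(\cdot+x_k)\rightharpoonup V$ in $H^1$ (hence in $L^2$) follows by writing out the decomposition translated by $x_k^{(1)}$, letting the remaining profiles escape to infinity while $r_k^\ell(\cdot+x_k^{(1)})\rightharpoonup 0$.

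The main obstacle is the first step: assembling the profile decomposition with all of the compatibility relations --- in particular the asymptotic additivity of the $L^{2+4/n}$ mass over profiles plus the vanishing of the remainder in $L^{2+4/n}$ as the number of profiles grows --- and managing the countable cascade of subsequence extractions (the usual diagonal argument) without destroying the $\limsup$ hypotheses. Once that machinery is in place, the remaining steps are just careful bookkeeping with the sharp Gagliardo--Nirenberg constant.
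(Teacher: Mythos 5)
The paper does not actually prove this statement---it quotes it from Hmidi--Keraani \cite{HmiKe}---and your argument (profile decomposition without scales, justified by the subcriticality of the exponent $2+\frac4n$, followed by the sharp Gagliardo--Nirenberg inequality applied profile by profile, summed using the almost-orthogonality of the gradients and the asymptotic additivity of the $L^{2+4/n}$ norms, and then extracting the profile of largest $L^2$ mass) is precisely the proof given in that reference; all of your intermediate constants, including $\norm{\nabla Q}_{L^2}^2=\frac n2\norm{Q}_{L^2}^2$ and the optimal constant $\frac{n+2}{n}\norm{Q}_{L^2}^{-4/n}$, check out. One caveat: your computation yields $\norm{V}_{L^2}\ge\left(\frac{n}{n+2}\right)^{n/4}m^{\frac n2+1}M^{-\frac n2}\norm{Q}_{L^2}$, \emph{without} the ``$+1$'' printed in the numerator of the statement above; that ``$+1$'' is a typo in the paper (the printed bound would remain bounded away from zero as $m\to0$, which is absurd), and the bound you obtain is both the original Hmidi--Keraani statement and exactly what is used later in the proof of Theorem \ref{thm:limiting_profile}, where the corresponding factor equals $1$.
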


We will also use the following lemma from \cite{OhTo09}.

\begin{lemma}[\cite{OhTo09}]\label{lem:f_limit}
Let $T \in (0, \infty)$,
and assume that $f: [0, T) \rightarrow \mathbb{R}^+$ is a continuous function.
If $\displaystyle \lim_{t \rightarrow T} f(t) = \infty$,
then there exists a sequence $\{ t_k \}$ in $\subseteq [0, T)$ such that
	\begin{align*}
	t_k \rightarrow T \qquad
	\textup{and} \qquad
	\frac{\displaystyle \int_0^{t_k} f(\tau) \ d\tau}{f(t_k)} \rightarrow 0 \qquad
	\textup{as} \qquad
	k \rightarrow \infty.
	\end{align*}
\end{lemma}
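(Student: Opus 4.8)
The plan is to reduce Lemma~\ref{lem:f_limit} to an elementary differential inequality for the antiderivative $g(t):=\int_0^t f(\tau)\,d\tau$. Since $f$ is continuous and positive, $g\in C^1([0,T))$ is strictly increasing with $g'(t)=f(t)$, so the quotient appearing in the statement equals $g(t)/g'(t)$, and it suffices to prove that $\liminf_{t\to T} g(t)/g'(t)=0$: once this is known, for each $k$ I would pick $t_k\in(T-1/k,\,T)$ with $g(t_k)/g'(t_k)<1/k$, and the resulting sequence has $t_k\to T$ and $g(t_k)/g'(t_k)\to 0$.

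I would split into two cases. If $\int_0^T f(\tau)\,d\tau<\infty$, then $g$ is bounded on $[0,T)$ while $g'(t)=f(t)\to\infty$, so in fact $g(t)/g'(t)\to 0$ as $t\to T$ and the conclusion is immediate. If instead $\int_0^T f=\infty$, i.e.\ $g(t)\to\infty$ as $t\to T$, I would argue by contradiction: suppose $\liminf_{t\to T} g(t)/g'(t)>0$. Then there exist $\delta'\in(0,\infty)$ and $t_0\in(0,T)$ with $g(t_0)>0$ such that $g(t)/g'(t)\ge\delta'$, equivalently $\frac{d}{dt}\log g(t)=g'(t)/g(t)\le 1/\delta'$, for all $t\in[t_0,T)$. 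Integrating this inequality from $t_0$ to $t$ gives $\log g(t)\le\log g(t_0)+(t-t_0)/\delta'\le\log g(t_0)+(T-t_0)/\delta'$, so $g$ stays bounded on $[t_0,T)$, contradicting $g(t)\to\infty$. Hence $\liminf_{t\to T} g(t)/g'(t)=0$, which finishes the proof.

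The only point needing a little care is ensuring $g(t_0)>0$ so that $\log g$ is defined on $[t_0,T)$; this is automatic because $f$ is continuous with $f(t)\to\infty$, hence $f$ is strictly positive on some left neighborhood of $T$ and $g$ is strictly positive there, so $t_0$ can be chosen close enough to $T$ to meet both this requirement and the standing lower bound on $g/g'$. Beyond that there is no real obstacle: the mechanism is simply that a positive $C^1$ function with logarithmic derivative bounded above grows at most exponentially and therefore cannot blow up over the finite span $[t_0,T)$, so blow-up of $g$ forces the quotient $g/g'$ to become arbitrarily small along some sequence tending to $T$.
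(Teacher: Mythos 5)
The paper does not prove this lemma; it is quoted verbatim from the reference [OhTo09], so there is no in-text argument to compare against. Your proof is correct and complete: reducing to $\liminf_{t\to T} g(t)/g'(t)=0$ for $g(t)=\int_0^t f$, handling the integrable case directly, and in the non-integrable case deriving boundedness of $g$ from $(\log g)'\le 1/\delta'$ over the finite interval $[t_0,T)$ is exactly the standard mechanism behind this lemma. The only cosmetic remark is that the case split is avoidable: assuming $g/g'\ge\delta'$ near $T$ always forces $g$ bounded, hence $f=g'\le g/\delta'$ bounded, contradicting $f(t)\to\infty$ directly without first asking whether $\int_0^T f$ converges; but your two-case version is equally valid, and your care about $g(t_0)>0$ and about extracting $t_k\in(T-1/k,T)$ from the vanishing $\liminf$ is exactly the right level of rigor.
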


Now we present the main result in this section.

\begin{theorem}[Concentration of mass]\label{thm:limiting_profile} 
Consider the Cauchy problem \eqref{eq:dnls_stark} with $p = 1 + \frac{4}{n}$.
Suppose that the solution $u$ of \eqref{eq:dnls_stark} blows up at finite time $T^* < \infty$,
i.e. $\| \nabla u(t) \|_{L^2} \rightarrow \infty$ as $t \rightarrow T^*$.
Then for any function $w(t)$ satisfying $w(t) \| \nabla u(t) \|_{L^2} \rightarrow \infty$ as $t \rightarrow T^*$,
there exists a function $x(t) \in \mathbb{R}^n$ such that
(up to a subsequence)
	\begin{align}
	\liminf_{t \rightarrow T^*} \| u(t) \|_{L^2 \left( |x - x(t)| < w(t) \right)}
	\geq \| Q \|_{L^2}.   \label{e:mass-concentration}
	\end{align}
\end{theorem}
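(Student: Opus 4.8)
The plan is to run the standard variational/concentration--compactness argument for $L^2$-critical blow-up, modified to cope with the loss of energy conservation. First I would renormalize: set $\rho(t):=\|\nabla Q\|_{L^2}/\|\nabla u(t)\|_{L^2}$, which tends to $0$ as $t\to T^*$, and define $v(t,x):=\rho(t)^{n/2}u(t,\rho(t)x)$. Then $\|v(t)\|_{L^2}=\|u(t)\|_{L^2}$, $\|\nabla v(t)\|_{L^2}=\|\nabla Q\|_{L^2}$, and a short computation gives $\|v(t)\|_{L^{2+4/n}}^{2+4/n}=\rho(t)^2\|u(t)\|_{L^{2+4/n}}^{2+4/n}$. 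By the mass law \eqref{eq:mass_decay} (valid also with the Stark potential, by Remark \ref{re:mass_decay}), $\|v(t)\|_{L^2}=e^{-at/2}\|u_0\|_{L^2}\le\|u_0\|_{L^2}$, so along any sequence $t_k\to T^*$ the family $\{v(t_k)\}$ is bounded in $H^1$ with $\limsup_k\|\nabla v(t_k)\|_{L^2}=\|\nabla Q\|_{L^2}=:M$, which is the bound needed to feed Theorem \ref{thm:compactness_lemma}.

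The crux is to extract a sequence $t_k\to T^*$ along which $\|v(t_k)\|_{L^{2+4/n}}$ is bounded below by $\|Q\|_{L^{2+4/n}}=:m$. From $\tfrac{2}{p+1}\|u\|_{L^{p+1}}^{p+1}=\|\nabla u\|_{L^2}^2-\mathcal{E}_0(u)$ and the scaling identity above one obtains
\[
\frac{2}{p+1}\,\frac{\|v(t)\|_{L^{p+1}}^{p+1}}{\|\nabla Q\|_{L^2}^2}=1-\frac{\mathcal{E}_0(u(t))}{\|\nabla u(t)\|_{L^2}^2},
\]
so it suffices to find $t_k\to T^*$ with $\limsup_k\mathcal{E}_0(u(t_k))/\|\nabla u(t_k)\|_{L^2}^2\le0$; the Pohozaev/Weinstein identity $\|Q\|_{L^{2+4/n}}^{2+4/n}=\tfrac{n+2}{n}\|\nabla Q\|_{L^2}^2$ then pins down the limiting constant. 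To control $\mathcal{E}_0$ I would integrate \eqref{eq:energy_decay}: using $-2i\int E\cdot u\nabla\bar u\,dx=-2E\cdot\mathcal{P}(u)$ and $2a\|u\|_{L^{p+1}}^{p+1}=a(2+\tfrac4n)(\|\nabla u\|_{L^2}^2-\mathcal{E}_0)$, the function $g(t)=\mathcal{E}_0(u(t))$ solves a linear ODE $g'=-\beta g+h$ with $\beta=a(2+\tfrac4n)>0$ and $h(t)=\tfrac{4a}{n}\|\nabla u(t)\|_{L^2}^2-2E\cdot\mathcal{P}(u(t))$; since $T^*<\infty$, the momentum $\mathcal{P}(u(t))$ stays bounded on $[0,T^*)$ by \eqref{eq:momentum_decay}, so Duhamel's formula yields $\mathcal{E}_0(u(t))\le C_0+\tfrac{4a}{n}\int_0^t\|\nabla u(s)\|_{L^2}^2\,ds$ with $C_0=C_0(u_0,a,E,T^*)<\infty$. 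Now apply Lemma \ref{lem:f_limit} to $f(t)=\|\nabla u(t)\|_{L^2}^2$, which is continuous on $[0,T^*)$ and tends to $\infty$: there is $t_k\to T^*$ with $\int_0^{t_k}f(\tau)\,d\tau/f(t_k)\to0$, and since $f(t_k)\to\infty$ this forces $\mathcal{E}_0(u(t_k))/\|\nabla u(t_k)\|_{L^2}^2\to0$. Hence $\liminf_k\|v(t_k)\|_{L^{2+4/n}}\ge\|Q\|_{L^{2+4/n}}$, so in particular $\limsup_k\|v(t_k)\|_{L^{2+4/n}}\ge m$.

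With $M$ and $m$ in hand, Theorem \ref{thm:compactness_lemma} applied to $\{v(t_k)\}$ produces, after passing to a subsequence, points $y_k\in\mathbb{R}^n$ and $V\in H^1$ with $v(t_k)(\cdot+y_k)\rightharpoonup V$ in $H^1$ and $\|V\|_{L^2}\ge\|Q\|_{L^2}$ (the Pohozaev relation among $m$, $M$ and $\|Q\|_{L^2}$ noted above is what yields exactly $\|Q\|_{L^2}$ on the right). To conclude, set $x(t_k):=\rho(t_k)y_k$. By weak lower semicontinuity of $\|\cdot\|_{L^2(B_R)}$, for each $\veps>0$ there is $R_\veps$ with $\|V\|_{L^2(|x|<R_\veps)}^2\ge\|Q\|_{L^2}^2-\veps$, hence $\liminf_k\|v(t_k)(\cdot+y_k)\|_{L^2(|x|<R_\veps)}^2\ge\|Q\|_{L^2}^2-\veps$; a change of variables rewrites the left-hand integral as $\|u(t_k)\|_{L^2(|x-x(t_k)|<\rho(t_k)R_\veps)}^2$. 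Since $w(t_k)\|\nabla u(t_k)\|_{L^2}\to\infty$ while $\rho(t_k)R_\veps=\|\nabla Q\|_{L^2}R_\veps/\|\nabla u(t_k)\|_{L^2}$, we get $\rho(t_k)R_\veps\le w(t_k)$ for all large $k$, so $\liminf_k\|u(t_k)\|_{L^2(|x-x(t_k)|<w(t_k))}^2\ge\|Q\|_{L^2}^2-\veps$; letting $\veps\to0$ gives \eqref{e:mass-concentration} along $\{t_k\}$, which is precisely the ``up to a subsequence'' claim.

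The main obstacle is the energy step. The damping destroys conservation of $\mathcal{E}_0$, so one cannot simply quote $\mathcal{E}_0(u(t))=\mathcal{E}_0(u_0)$ as in the classical Merle--Tsutsumi/Weinstein argument, and $\mathcal{E}_0(u(t))$ can genuinely grow. Two features make it work: (i) the Stark potential enters $\tfrac{d}{dt}\mathcal{E}_0$ only through the momentum, which remains bounded on the finite interval $[0,T^*)$, so it contributes nothing harmful --- in particular it is essential to run the estimate with $\mathcal{E}_0$ rather than the natural Stark energy $\mathcal{E}_V$, since the latter would drag in the uncontrolled weighted term $\int E\cdot x\,|u|^2$; and (ii) the one genuinely unbounded contribution $\tfrac{4a}{n}\int_0^t\|\nabla u\|_{L^2}^2$ is of exactly the right form to be absorbed by $\|\nabla u(t_k)\|_{L^2}^2$ along the sequence supplied by Lemma \ref{lem:f_limit}. (An alternative, less self-contained route conjugates \eqref{eq:dnls_stark} to the damped NLS \eqref{eq:dnls} via the Avron--Herbst formula \eqref{eq:ah_inv}: since $|\varphi(t,x)|=|u(t,x-t^2E)|$ and $\|\nabla\varphi(t)\|_{L^2}$, $\|\nabla u(t)\|_{L^2}$ are comparable as $t\to T^*$, the statement transfers from the corresponding concentration result for \eqref{eq:dnls}.)
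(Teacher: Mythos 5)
Your proof is correct and follows essentially the same route as the paper: the same renormalization $v_k=\rho_k^{n/2}u(t_k,\rho_k\,\cdot)$, the same application of the Hmidi--Keraani compactness lemma (Theorem \ref{thm:compactness_lemma}) with $M=\|\nabla Q\|_{L^2}$ and $m=\big(\tfrac{n+2}{n}\|\nabla Q\|_{L^2}^2\big)^{\frac{n}{2n+4}}$, and the same use of Lemma \ref{lem:f_limit} to neutralize the energy growth caused by the damping. The only cosmetic differences are that you control $\mathcal{E}_0(u(t))$ via a linear ODE/Duhamel argument using the boundedness of the momentum on $[0,T^*)$, whereas the paper bounds the integrand $G(u)$ pointwise by $\|\nabla u\|_{L^2}+\|\nabla u\|_{L^2}^2$ (Cauchy--Schwarz plus Gagliardo--Nirenberg) and feeds that function into Lemma \ref{lem:f_limit}, and that you take $x(t_k)=\rho_k y_k$ directly rather than the maximizer of the local mass; both yield $\mathcal{E}_0(u(t_k))/\|\nabla u(t_k)\|_{L^2}^2\to 0$ along the selected sequence and the same conclusion.
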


\begin{proof}
Let $G(u)$ be the r.h.s. of the equation \eqref{eq:energy_decay},
i.e.
	\begin{align*}
	G(u)
	:= - 2 i \int_{\mathbb{R}^n} E \cdot u \nabla \overline{u} dx
	- 2 a \| \nabla u(t) \|_{L^2}^2
	+ 2 a \| u(t) \|_{L^{2 + \frac{4}{n}}}^{2 + \frac{4}{n}}.
	\end{align*}
Integrating \eqref{eq:energy_decay} on $[0, t)$,
we have
	\begin{align}\label{eq:E_G}
	\mathcal{E}_0(u(t))
	= \mathcal{E}_0(u_0)
	+ \int_0^t  G(u(\tau)) d\tau.
	\end{align}
Now we give an estimate of $G(u)$.
By H\"older's inequality and the mass decay \eqref{eq:mass_decay},
we have
	\begin{align*}
	\left| \int_{\mathbb{R}^n} E \cdot u \nabla \overline{u} dx \right|
	&\leq \| E \cdot u(t) \|_{L^2} \| \nabla u(t) \|_{L^2} \\
	&= |E| \, \| u(t) \|_{L^2} \| \nabla u(t) \|_{L^2}
	\leq |E| \, \| u_0 \|_{L^2} \| \nabla u(t) \|_{L^2},
	\end{align*}
and by Gagliardo-Nirenberg inequality and \eqref{eq:mass_decay},
we have
	\begin{align*}
	\left\| u(t) \right\|_{L^{2 + \frac{4}{n}}}^{2 + \frac{4}{n}}
	\leq C \left\| u(t) \right\|_{L^{2}}^{\frac{4}{n}} \left\| \nabla u(t) \right\|_{L^{2}}^{2}
	\leq C \left\| u_0 \right\|_{L^{2}}^{\frac{4}{n}} \left\| \nabla u(t) \right\|_{L^{2}}^{2},
	\end{align*}
so
	\begin{align}
	\left| G( u(t) ) \right|
	&\leq 2 |E| \, \| u_0 \|_{L^2} \| \nabla u(t) \|_{L^2}
	+ 2a \left\| \nabla u(t) \right\|_{L^2}^2
	+ 2a C \left\| u_0 \right\|_{L^{2}}^{\frac{4}{n}} \left\| \nabla u(t) \right\|_{L^2}^2 \notag \\
	&\lesssim \| \nabla u(t) \|_{L^2} + \| \nabla u(t) \|_{L^2}^2. \label{eq:G_estimate}
	\end{align}
Since $\left\| \nabla u(t) \right\|_{L^2} \rightarrow \infty$ as $t \rightarrow T^*$,
by Lemma \ref{lem:f_limit},
there exists a sequence $t_k \rightarrow T^*$ such that
	\begin{align*}
	\frac{\displaystyle \int_0^{t_k} \left( \| \nabla u(\tau) \|_{L^2} + \| \nabla u(\tau) \|_{L^2}^2 \right) d\tau}
	{\| \nabla u(t_k) \|_{L^2} + \| \nabla u(t_k) \|_{L^2}^2} \rightarrow 0\,,
	\end{align*}
so by \eqref{eq:G_estimate} we have
	\begin{align}\label{eq:integral_control}
	\frac{\displaystyle \int_0^{t_k} G( u(\tau) ) \ d\tau}{\left\| \nabla u(t_k) \right\|_{L^2}^2}
	\rightarrow 0.
	\end{align}
Let
	\begin{align*}
	\rho(t) := \frac{\| \nabla Q \|_{L^2}}{\| \nabla u(t) \|_{L^2}}\,, \qquad
	v(t, x) := \rho^{\frac{n}{2}} u(t, \rho x), \qquad
	\rho_k := \rho(t_k), \qquad
	v_k(x) := v(t_k, x).
	\end{align*}
Now we check that $\{ v_k \}$ defined above satisfies the assumptions in Theorem \ref{thm:compactness_lemma}.
Firstly,
by the mass decay \eqref{eq:mass_decay},
we have
	\begin{align*}
	\| v_k \|_{L^2}
	= \| u(t_k) \|_{L^2}
	\leq \| u_0 \|_{L^2},
	\end{align*}
showing the boundedness of $\{ v_k \}$.
Secondly,
since
	\begin{align*}
	\| \nabla v_k \|_{L^2}
	= \rho_k \| \nabla u(t_k) \|_{L^2}
	= \| \nabla Q \|_{L^2},
	\end{align*}
we know that inequality \eqref{eq:upper_bound} is satisfied with $M = \| \nabla Q \|_{L^2}$.
Finally,
by \eqref{eq:energy},
\eqref{eq:E_G},
and \eqref{eq:integral_control},
we have
	\begin{align*}
	\mathcal{E}_0(v_k)
	&= \| \nabla v_k \|_{L^2}^2
	- \frac{n}{n + 2} \| v_k \|_{L^{2 + \frac{4}{n}}}^{2 + \frac{4}{n}} \\
	&= \rho_k^2 \| \nabla u(t_k) \|_{L^2}^2
	- \frac{n}{n + 2} \left( \rho_k^2 \| u(t_k) \|_{L^{2 + \frac{4}{n}}}^{2 + \frac{4}{n}} \right) \\
	&= \rho_k^2 \, \mathcal{E}_0( u(t_k) ) \\
	&= \rho_k^2 \, \mathcal{E}_0(u_0)
	+ \rho_k^2 \, \int_0^{t_k}  G(u(\tau)) d\tau \\
	&= \frac{\| \nabla Q \|_{L^2}^2 \mathcal{E}_0(u_0)}{\| \nabla u(t_k) \|_{L^2}^2}
	+ \| \nabla Q \|_{L^2}^2  \frac{\displaystyle \int_0^{t_k}  G(u(\tau)) d\tau}{\| \nabla u(t_k) \|_{L^2}^2}
	\rightarrow 0 \qquad
	\textup{as } k \rightarrow \infty,
	\end{align*}
or equivalently,
	\begin{align*}
	\| v_k \|_{L^{2 + \frac{4}{n}}}^{2 + \frac{4}{n}}
	\rightarrow \frac{n + 2}{n} \| \nabla v_k \|_{L^2}^2
	= \frac{n + 2}{n} \| \nabla Q \|_{L^2}^2\,,
	\end{align*}
so inequality \eqref{eq:lower_bound} is also satisfied
with $m = \left( \frac{n + 2}{n} \| \nabla Q \|_{L^2}^2 \right)^\frac{n}{2n + 4}$.
Hence by Theorem \ref{thm:compactness_lemma},
there exists a sequence $\{ x_k \} \subseteq \mathbb{R}^n$ such that
	\begin{align}\label{eq:weak_convergence}
	\rho_k^\frac{n}{2} u(t_k, \rho_k \cdot + x_k) \rightharpoonup V
	\end{align}
weakly in $H^1$,
and
	\begin{align*}
	\| V \|_{L^2}
	&\geq \left( \frac{n}{n + 2} \right)^\frac{n}{4} \frac{m^{\frac{n}{2} + 1} + 1}{M^{\frac{n}{2}}} \| Q \|_{L^2} \\
	&= \left( \frac{n}{n + 2} \right)^\frac{n}{4}
	\frac{\left( \frac{n + 2}{n} \| \nabla Q \|_{L^2}^2 \right)^{\frac{n}{4}} + 1}{\| \nabla Q \|_{L^2}^{\frac{n}{2}}} \| Q \|_{L^2}
	\geq \| Q \|_{L^2}.
	\end{align*}
By \eqref{eq:weak_convergence},
for every $R > 0$,
there is
	\begin{align*}
	\liminf_{k \rightarrow \infty} \int_{|x| \leq R} \rho_k^n \left| u(t_k, \rho_k x + x_k) \right|^2 dx
	\geq \int_{|x| \leq R} \left| V \right|^2 dx,
	\end{align*}
or equivalently,
	\begin{align*}
	\liminf_{k \rightarrow \infty} \int_{|x - x_k| \leq \rho_k R} \left| u(t_k, x) \right|^2 dx
	\geq \int_{|x| \leq R} \left| V \right|^2 dx.
	\end{align*}

Now let $w(t)$ be a function satisfying $w(t) \| \nabla u(t) \|_{L^2} \rightarrow \infty$ as $t \rightarrow T^*$.
Then $\frac{w(t_k)}{\rho_k} \rightarrow \infty$ as $k \rightarrow \infty$,
so $\frac{w(t_k)}{\rho_k} \geq R$ for $k$ large enough.
Hence,
	\begin{align*}
	\liminf_{k \rightarrow \infty} \sup_{y \in \mathbb{R}^n} \int_{|x - y| \leq w(t_k)} \left| u(t_k, x) \right|^2 dx
	&\geq \liminf_{k \rightarrow \infty} \sup_{y \in \mathbb{R}^n} \int_{|x - y| \leq \rho_k R} \left| u(t_k, x) \right|^2 dx \\
	&\geq \liminf_{k \rightarrow \infty} \int_{|x - x_k| \leq \rho_k R} \left| u(t_k, x) \right|^2 dx \\
	&\geq \int_{|x| \leq R} \left| V \right|^2 dx,
	\end{align*}
and letting $R \rightarrow \infty$,
we have
	\begin{align*}
	\liminf_{k \rightarrow \infty} \sup_{y \in \mathbb{R}^n} \int_{|x - y| \leq w(t_k)} \left| u(t_k, x) \right|^2 dx
	\geq \int_{\mathbb{R}^n} \left| V \right|^2 dx
	\geq \| Q \|_{L^2}^2.
	\end{align*}
For every $t \in [0, T^*)$,
the function
	\begin{align*}
	y \mapsto \int_{|x - y| \leq w(t)} \left| u(t, x) \right|^2 dx
	\end{align*}
is continuous and vanishes as $|y| \rightarrow \infty$,
so the supremum is attained at some point $x(t) \in \mathbb{R}^n$.
Therefore,
	\begin{align*}
	\liminf_{k \rightarrow \infty} \int_{|x - x(t_k)| \leq w(t_k)} \left| u(t_k, x) \right|^2 dx
	= \liminf_{k \rightarrow \infty} \sup_{y \in \mathbb{R}^n} \int_{|x - y| \leq w(t_k)} \left| u(t_k, x) \right|^2 dx
	\geq \| Q \|_{L^2}^2\,,
	\end{align*}
and the proof of Theorem \ref{thm:limiting_profile} is complete.
\end{proof}

Now we give an alternative proof of Theorem \ref{thm:dnls_stark_threshold} (a) by virtue of the mass concentration property (\ref{e:mass-concentration})
in Theorem \ref{thm:limiting_profile}.

\begin{corollary}\label{cor:proof_a}
Theorem \ref{thm:dnls_stark_threshold} (a) holds for all dimensions.
\end{corollary}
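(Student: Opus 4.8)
The plan is to argue by contradiction, combining the mass concentration property of Theorem \ref{thm:limiting_profile} with the exponential mass decay \eqref{eq:mass_decay}. By Proposition \ref{p:dnls:VE}, the Cauchy problem \eqref{eq:dnls_stark} with $u_0\in\cH^1$ has a unique maximal solution $u\in C([0,T^*),\cH^1)$ with $T^*>0$, and the blow-up alternative holds: if $T^*<\infty$ then $\norm{\nabla u(t)}_2\to\infty$ as $t\to T^*$. Suppose, toward a contradiction, that $T^*<\infty$.

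First I would apply Theorem \ref{thm:limiting_profile} with the admissible choice $w(t)\equiv 1$ (any fixed positive constant works, since $w(t)\norm{\nabla u(t)}_2=\norm{\nabla u(t)}_2\to\infty$ automatically once $T^*<\infty$). This produces a function $x(t)\in\R^n$ and, up to a subsequence $t_k\to T^*$,
\[
\liminf_{k\to\infty}\ \norm{u(t_k)}_{L^2(|x-x(t_k)|<1)}\ \geq\ \norm{Q}_{L^2}.
\]
Next I would bound the localized mass by the total mass and use \eqref{eq:mass_decay}, which is valid for \eqref{eq:dnls_stark} since the Stark term $E\cdot x\,|u|^2$ is real and contributes nothing to $\tfrac{d}{dt}\norm{u}_{L^2}^2$ (cf. Remark \ref{re:mass_decay}): for every $t\in[0,T^*)$,
\[
\norm{u(t)}_{L^2(|x-x(t)|<1)}\ \leq\ \norm{u(t)}_{L^2}\ =\ e^{-at/2}\norm{u_0}_{L^2}\ \leq\ e^{-at/2}\norm{Q}_{L^2},
\]
using the hypothesis $\norm{u_0}_{L^2}\leq\norm{Q}_{L^2}$. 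Passing to the limit along $t_k\to T^*$ and recalling $a>0$ and $T^*<\infty$, the right-hand side tends to $e^{-aT^*/2}\norm{Q}_{L^2}<\norm{Q}_{L^2}$, which contradicts the concentration lower bound above. Hence $T^*=\infty$, i.e. the solution is global, and since Theorem \ref{thm:limiting_profile} was established for every dimension $n$ with $p=1+\tfrac4n$, this proves Theorem \ref{thm:dnls_stark_threshold} (a) without the restriction $n\le 4$. Applied with $\norm{u_0}_{L^2}=\norm{Q}_{L^2}$, the same computation also reproves the nonexistence of minimal-mass blow-up solutions for \eqref{eq:dnls_stark}.

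I do not anticipate a real obstacle: the only point needing care is that the \emph{strict} inequality $e^{-aT^*/2}<1$ is available precisely because we are in the finite-time blow-up scenario $T^*<\infty$ with $a>0$ (if $T^*=\infty$ there is nothing to prove), and that the ``up to a subsequence'' clause in Theorem \ref{thm:limiting_profile} is harmless since the mass-decay upper bound holds for every $t$ and so for every subsequence.
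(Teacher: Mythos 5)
Your argument is correct and is essentially the paper's own proof: both assume finite-time blow-up, apply Theorem \ref{thm:limiting_profile} with $w(t)\equiv 1$ to obtain a localized mass lower bound of $\|Q\|_{L^2}$, and contradict this with the exponential mass decay \eqref{eq:mass_decay} under the hypothesis $\|u_0\|_{L^2}\leq\|Q\|_{L^2}$. Your write-up merely makes the quantitative step $e^{-aT^*/2}\|Q\|_{L^2}<\|Q\|_{L^2}$ explicit, which the paper leaves implicit.
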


\begin{proof} Since $u_0\in \cH^1$, according to Proposition \ref{p:dnls:VE}, 
there exist $0<T^*\le\iy$ and a unique solution $u(t)$ in $C([0,T^*), \cH^1)$. 
Assume  
$u$ blows up at $T^* < \infty$.  
Then by  (\ref{e:mass-concentration})
(taking $w(t) \equiv 1$),
we have
(up to a subsequence)
	\begin{align*}
	\liminf_{t \rightarrow T^*} \| u(t) \|_{L^2 \left( |x - x(t)| < 1 \right)}
	\geq \| Q \|_{L^2}
	\end{align*}
for some function $x(t) \in \mathbb{R}^n$.
However by \eqref{eq:mass_decay},
$\| u(t) \|_{L^2}$ decays in $t$,
so its limit inferior will be strictly less than $\| Q \|_{L^2}$,
contradictory to the prior inequality.
\end{proof}

\begin{remark}
By Theorem \ref{thm:limiting_profile},
if $\| u_0 \|_{L^2} > \| Q \|_{L^2}$ and $u$ blows up at finite time $T^*$,
then $T^* \leq \frac{1}{a} \log \left( \frac{\| u_0 \|_{L^2}}{\| Q \|_{L^2}} \right)$.
\end{remark}  

\bigskip  
\noindent 
{\bf Acknowledgment}  {S.Z. is partially supported by NNSFC grant No. 12071323 as Co-PI.}     

\bigskip 

\end{document}